\newcommand{\thickhline}{%
	\noalign {\ifnum 0=`}\fi \hrule height 1pt
	\futurelet \reserved@a \@xhline
}
\newcolumntype{"}{@{\hskip\tabcolsep\vrule width 1pt\hskip\tabcolsep}}
\newtheorem{theorem}{Theorem}[section]
\newtheorem{lemma}[theorem]{Lemma}
\theoremstyle{definition}
\newtheorem{corollary}[theorem]{Corollary}
\newtheorem{proposition}[theorem]{Proposition}
\newtheorem{example}[theorem]{Example}
\theoremstyle{remark}
\newtheorem{remark}[theorem]{Remark}
\numberwithin{equation}{section}
\begin{document}
	\title{On the Monotonocity of the Hilbert Functions for 4-generated pseudo-symmetric monomial curves}
	\author{N\.{i}l \c{S}ah\.{i}n}
	\address{Department of Industrial Engineering, Bilkent University, Ankara, 06800 Turkey}
	\email{nilsahin@bilkent.edu.tr}

	\thanks{}
	
	\subjclass[2010]{Primary 13H10, 14H20; Secondary 13P10}
	\keywords{Hilbert function, tangent cone, monomial
		curve, numerical semigroup, standard bases}
	
	\date{\today}
	
	\commby{}
	
	\dedicatory{}

\begin{abstract} 
In this article  we solve the conjecture "Hilbert function of the local ring for a 4 generated pseudo-symmetric numerical semigroup $<n_1,n_2,n_3,n_4>$ is always non-decreasing when $n_1<n_2<n_3<n_4$". We give a complete characterization to the standard bases  when the tangent cone is not Cohen-Macaulay by showing that the number of elements in the standard basis depends on some parameters $s_j$'s we define. Since the tangent cone is not Cohen-Macaulay, non-decreasingness of the Hilbert fuction was not guaranteed, we proved the non-decreasingness from our explicit Hilbert Function computation.
\keywords{Hilbert function, tangent cone, monomial curve, numerical semigroup, standard bases}
\end{abstract}

\maketitle
\section{Introduction}
\label{Sec:1}
Cohen-Macaulayness and the Hilbert Functions of the tangent cone of a projective variety is a classical problem of commutative algebra as the Hilbert Function gives important geometric information like the degree, arithmetic genus and the dimension of the variety. Despite the fact that Hilbert function of a Cohen-Macaulay algebra is well understood, very little is known in the local case. One of the main problems is if the properties of the local ring can be carried out to the tangent cone, see \cite{Rossi}. It is well known that the good properties of the local ring as being Cohen-Macaulay, Gorenstein, complete intersection or level can not be carried out to the the tangent cone in general. Most of the time, to explore these, an explicit standard basis computation to the defining ideal of the local ring and of the graded ring is required, \cite{Leila}. A long standing conjecture in the theory of Hilbert functions is  Sally's conjecture " Hibert Function of a one dimensional Cohen-Macaulay local ring with small enough embedding dimension is nondecreasing" \cite{Sally}. The statement is obvious for embedding dimension 1, proved by Matlis \cite{Matlis} and Elias \cite{Elias1} in embedding dimensions 2 and 3 respectively. Counter examples are given for embedding dimension 4 by Gupta and Roberts, for each embedding dimension greter than 4 by Orecchia \cite{GuptaRoberts,Orecchia}. The problem is open even in monomial curve case: There are many affirmative answers but the counter examples were given in affine 10 space by Herzog and Waldi \cite{HerzogWaldi} and in affine 12 space by Eakin and Sathaye \cite{EakinSathaye}. In these counter examples, the local ring was Cohen-Macaulay so the Cohen-Macaulayess of the local ring does not guarantee the non-decreasingness property of the Hilbert Function when embedding dimension is greater than 3. In \cite{Rossi} Rossi conjectured that "Hilbert Function of a one dimensional Gorenstein local ring is non-decreasing". This problem has many positive answers, see \cite{am,AMS,PatilTamone,AOS,JafZar,AKN,Oneto}. Recently, in \cite{OnetoStraz} Oneto, Strazzanti and Tamone constructed explicit examples of Gorenstein numerical semigroup rings with decreasing Hilbert Function and give counter examples to Rossi's conjecture. However, Sally's conjecture is still open for the monomial curves in $n$ space when $3<n<10$. We focus on the first case: numerical semigroups in 4-space. In 1978, Stanley proved that "If the tangent cone is Cohen-Macaulay, then The Hilbert function of a Cohen-Macaulay local ring is nondecreasing". Arslan and Mete put the extra condition $\alpha_{2}\leq \alpha_{21}+\alpha_{24}$ to the generators of a symmetric semigroup when $n_1<n_2<n_3<n_4$. They find the generators of the defining ideal and showed the Cohen-Macaulayness of the tangentcone which proves the nondecreasingness of the Hilbert Function by Stanley's theorem. The case $\alpha_{2}> \alpha_{21}+\alpha_{24}$ is still open in symmetric case. As symmetric and pseudosymmetric semigroups are maximal with respect to inclusion with fixed genus, we focus on 4 generated pseudosymmetric monomial curves in this paper and solve the conjecture 
\begin{center}" Is the Hilbert function of the local ring corresponding to a 4 generated numerical semigroup nondecreasing?".\end{center}

\noindent The case $\alpha_2\leq \alpha_{21}+1$ is studied in \cite{SahinSahin} and since the tangent cone is C-M, without an explicit Hilbert Function computation, non-decreasingness of the Hilbert Function is proved by the Stanley's theorem. Though in this case there are 5 elements in the standard basis, the number of elements in the standard basis increase in the open case  $\alpha_2> \alpha_{21}+1$ when $\alpha_4$ increase, which makes the standard basis computation difficult as the normal forms of the s-polynomials should be added to the standard basis each time, see \cite{Sahin}. Furthermore, as the tangent cone is not C-M , an explicit Hilbert Function computation is needed in this case to prove the non-decreasingness of the Hilbert Function.  $\alpha_4=2$ and $\alpha_4=3$ cases are investigated in \cite{Sahin, Sahin2}  respectively. Though the Hilbert Functions are computed in both of these cases, nondecreasingness of the Hilbert Function is not proved. Since  the number of elements inside the standard basis increase when $\alpha_4$ increase, giving a complete characterization to the standard basis for a general $\alpha_4$ is significant in showing the nondecreasingness of the Hilbert function. In this paper,  we will give a standard basis for a general $\alpha_4$, show that the elements in this basis depend on some parameters we define as $s_j's$ for $j=0,1,\hdots, \alpha_4-1$. We show that the Hilbert Function is non-decreasing independent from the $s_j$'s.

The structure of the paper is the following. In Section \ref{prelim} we introduce pseudo-symmetric semigroups and give some preliminaries. In Section \ref{2}, we decribe the standard basis of the defining ideal in Theorem \ref{standardbasis}. In Section \ref{3} we decribe the Hilbert Function in Theorem \ref{Hilbfunc}, second Hilbert Function in Theorem \ref{2ndHilb} and prove our main result, see Theorem \ref{nondecreasing}. In section \ref{4}, we give explicit examples of 4-generated pseudo-symmetric monomial curves with nondecreasing Hilbert functions. Finally, the appendix contains some technical facts required to prove nondecreasingness of the Hilbert Function.
\section{Preliminaries}\label{prelim}
\noindent $n_1< n_2<\dots<n_k$ being positive integers with $\gcd (n_1,\dots,n_k)=1$, the numerical semigroup generated by these integers is defined as $S=\langle n_1,\dots,n_k \rangle=\{ \displaystyle\sum_{i=1}^{k} u_in_i | u_i \in \mathbb{N}\}$. $K$ being an algebraically closed field, the semigroup ring of $S$ is $K[S]=K[t^{n_1}, t^{n_2}, \dots, t^{n_k}]$ and let $A=K[X_1,X_2,\dots,X_k]$. If  $\phi: A {\longrightarrow} K[S]$ with $\phi(X_i)=t^{n_i}$ and $\ker \phi=I_S$ , then $K[S]\simeq A/I_S$. Let $C_S$ be the affine curve corresponding to $S$ with parametrization 
$$X_1=t^{n_1},\ \ X_2=t^{n_2},\ \dots,\  X_k=t^{n_k}  $$
then $I_S$ is called the defining ideal of $C_S$. The  \textit{multiplicity} of $C_S$ is smallest integer $n_1$ in the semigroup. Let's denote the corresponding local ring with $R_S=K[[t^{n_1},\dots,t^{n_k}]]$ and the maximal ideal with $\mathfrak{m}=\langle t^{n_1},\hdots,t^{n_k}\rangle$. Then
$gr_{\mathfrak{m}}(R_S)=\bigoplus_{i=0}^{\infty} \mathfrak{m}^i/\mathfrak{m}^{i+1}\cong A/{I^*_S},$ is the associated graded ring 
where ${I^*_S}=\langle f^*| f \in I_S \rangle$ with $f^*$ denoting the least homogeneous summand of $f$. 

The Hilbert function of the associated
graded ring $gr_{\mathfrak{m}}(R_S)=\bigoplus_{i=0}^{\infty} \mathfrak{m}^i/\mathfrak{m}^{i+1}$ is often referred to the Hilbert function $H_{R_S}(n)$ of the local ring $R_S$.
In other words,
$$H_{R_S}(n)=H_{gr_{\mathfrak{m}}(R_S)}(n)=dim_{R_S/\mathfrak{m}}(\mathfrak{m}^n/\mathfrak{m}^{n+1}) \; \; n\geq
0.$$
This function is called non-decreasing if $H_{R_S}(n)\geq H_{R_S}(n-1)$ for all $n \in \mathbb{N}$.
The Hilbert series of $R_S$ is defined to be the generating function
$$HS_{R_S}(t)=\begin{displaystyle}\sum_{n \in \mathbb{N}}\end{displaystyle}H_{R_S}(n)t^n.$$
By the Hilbert-Serre theorem it can also be written as:
$HS_{R_S}(t)=\frac{P(t)}{(1-t)^k}=\frac{Q(t)}{(1-t)^d}$, where $P(t)$ and $Q(t)$ are polynomials with coefficients in
$\mathbb{Z}$ and $d$ is the Krull dimension of $R_S$. $P(t)$ is called first Hilbert Series and $Q(t)$ is called second Hilbert series, \cite{greuel-pfister,Rossi}. It is also known that there is
a polynomial $P_{R_S}(n) \in \mathbb{Q}[n]$ called Hilbert polynomial of $R_S$ such that
$H_{R_S}(n)=P_{R_S}(n)$ for all $n \geq n_0$, for some $n_0 \in \mathbb{N}$. The smallest $n_0$ satisfying this
condition is the regularity index of the Hilbert function of $R_S$.

In \cite{komeda}, Komeda gives an explicit description to 4- generated pseudo-symmetric numerical semigroups : A $4$-generated  semigroup $S=\langle n_1,n_2,n_3,n_4 \rangle$ is pseudo-symmetric if and only if there are integers $\alpha_i>1$, for
$1\le i\le4$, and $\alpha_{21}>0$ with $0<\alpha_{21}<\alpha_1-1$,
such that 
\begin{eqnarray*}
	n_1&=&\alpha_2\alpha_3(\alpha_4-1)+1,\\
	n_2&=&\alpha_{21}\alpha_3\alpha_4+(\alpha_1-\alpha_{21}-1)(\alpha_3-1)+\alpha_3,\\
	n_3&=&\alpha_1\alpha_4+(\alpha_1-\alpha_{21}-1)(\alpha_2-1)(\alpha_4-1)-\alpha_4+1,\\
	n_4&=&\alpha_1\alpha_2(\alpha_3-1)+\alpha_{21}(\alpha_2-1)+\alpha_2. 
\end{eqnarray*} He also gave an explicit characterization to the toric ideal as  $I_S=\langle f_1,f_2,f_3,f_4,f_5 \rangle$ with
\begin{eqnarray*} f_1&=&X_1^{\alpha_1}-X_3X_4^{\alpha_4-1},  \quad \quad
	f_2=X_2^{\alpha_2}-X_1^{\alpha_{21}}X_4, \quad
	f_3=X_3^{\alpha_3}-X_1^{\alpha_1-\alpha_{21}-1}X_2,\\
	f_4&=&X_4^{\alpha_4}-X_1X_2^{\alpha_2-1}X_3^{\alpha_3-1}, \quad 
	f_5=X_1^{\alpha_{21}+1}X_3^{\alpha_3-1}-X_2X_4^{\alpha_4-1}.
\end{eqnarray*}  
If $n_1<n_2<n_3< n_4$ then it is known from \cite{SahinSahin} that 
\begin{enumerate}
	\item[(1)] $\alpha_1>\alpha_4$
	\item[(2)] $\alpha_3<\alpha_1-\alpha_{21}$
	\item[(3)] $\alpha_4<\alpha_2+\alpha_3-1$
\end{enumerate}
and these conditions completely determine the leading monomials of $f_1, f_3$ and $f_4$. Indeed, $ {\rm LM}( f_1)= X_3X_4^{\alpha_{4}-1}$ by $(1)$, ${\rm LM}(f_3)= X_3^{\alpha_3}$ by $(2)$, ${\rm LM}(f_4)=X_4^{\alpha_4}$ by $(3)$. For the case $\alpha_2\leq\alpha_{21}+1$, a we have given a complete characterization to the standard basis in \cite{SahinSahin} and since the tangent cone is Cohen-Macaulay in this case, we showed that the Hilbert funciton is nondecreasing. 
If we let
\begin{enumerate}
	\item[(4)]  $\alpha_2>\alpha_{21}+1$
\end{enumerate}
we determine the leading monomial of $f_2$ as ${\rm LM}( f_2)=X_1^{\alpha_{21}}X_4$.
\section{Standard bases}\label{2}
Before we state and prove our main theorem, we will prove the following proposition about the normal forms of the polynomials that are in specific forms to simplify our computations. This may be a proposition that is stated and proved before but since we did not encounter it, we will give a proof here.
\begin{proposition}\label{shortcut}
	Let $m_1$ and $m_2$ be monomials and let $g=m_1-m_2$, $f=m_1^k-m_2^k$ where $k $ is a natural number greater than or equal to $1$. If $G$ is a set containing $g$, then ${\rm NF}(f|G)=0$.
\end{proposition}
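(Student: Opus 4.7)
The plan is to produce an explicit chain of Mora reductions by $g$ alone that carries $f$ to $0$. The algebraic starting point is the elementary factorization
$$m_1^k-m_2^k \;=\; (m_1-m_2)\bigl(m_1^{k-1}+m_1^{k-2}m_2+\cdots+m_1m_2^{k-2}+m_2^{k-1}\bigr)\;=\;g\cdot\sum_{i=0}^{k-1} m_1^{k-1-i}m_2^i,$$
which already writes $f$ as an honest monomial combination of $g$, so in particular $f\in\langle G\rangle$.

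To upgrade this into a normal-form computation, I would relabel $m_1,m_2$ so that $m_1$ is the leading monomial of $g$ in the (local) monomial order used to compute the standard basis. Since any admissible monomial order is compatible with multiplication, iterating $m_1>m_2$ gives $m_1^{k-j}m_2^{\,j}>m_2^k$ for every $j<k$; in particular ${\rm LM}(f)=m_1^k$. Setting $f_0:=f$ and, for $j=1,\dots,k$,
$$f_j \;:=\; f_{j-1}-m_1^{k-j}m_2^{\,j-1}\,g,$$
an induction gives $f_j=m_1^{k-j}m_2^{\,j}-m_2^k$, so $f_k=0$. Each step is a legal reduction by $g$: the leading monomial of $f_{j-1}$ is $m_1^{k-j+1}m_2^{\,j-1}$, which is precisely ${\rm LM}(g)=m_1$ times the monomial multiplier $m_1^{k-j}m_2^{\,j-1}$.

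The only point that requires care, given that the intended setting is a local order on the tangent cone ring, is checking that this chain really qualifies as a Mora normal-form reduction and not merely a Buchberger-style one. This is automatic here: at every step the multiplier is a single monomial, so the ecart of $m_1^{k-j}m_2^{\,j-1}g$ coincides with the ecart of $g$, and no auxiliary reduction element or intermediate insertion is ever needed. The chain therefore terminates at $0$ after exactly $k$ steps, independently of whatever else $G$ may contain, proving ${\rm NF}(f|G)=0$.
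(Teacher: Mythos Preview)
Your proof is correct and follows essentially the same route as the paper: both perform a chain of $k$ successive reductions by $g$ alone, with the $j$-th intermediate remainder equal (up to sign) to $m_1^{k-j}m_2^{\,j}-m_2^{k}$, terminating at $0$. Your extra remarks on the factorization $m_1^k-m_2^k=g\cdot\sum m_1^{k-1-i}m_2^{\,i}$ and on the Mora ecart condition go slightly beyond what the paper spells out, but the core reduction argument is identical.
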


\begin{proof}
	Without losing generality, assume that ${\rm LM }(g)=m_1$ which makes ${\rm LM}(f)=m_1^k$. Then ${\rm spoly}(f,g)=f-m_1^{k-1}g=m_2\left[  m_1^{k-1}-m_2^{k-1}\right] =r_1$. ${\rm LM}(r_1)=m_1^{k-1}m_2$ and ${\rm spoly}(r_1,g)=m_1^{k-2}m_2g-r_1=-m_2^2\left[ m_1^{k-2}-m_2^{k-2} \right] =r_2$. Then continuing inductively, $r_k={\rm spoly }(r_{k-1},g)=(-1)^{k+1}m_2^kg$ and hence, ${\rm spoly}(r_k,g)=r_k-(-1)^{k+1}m_2^kg=0$
\end{proof}
Recall the next remark from \cite{cox}
\begin{remark}\label{cox}
	Let $f$ be a polynomial and $G$ be a standard basis of an ideal $I$. Then $f\in I$ iff ${\rm NF}(f|g)=0$
\end{remark}
\begin{theorem}\label{standardbasis}
	Let $S=\langle n_1,n_2,n_3,n_4 \rangle$ be a 4-generated pseudosymmetric numerical semigroup with $n_1<n_2<n_3<n_4$. If $\alpha_2>\alpha_{21}+1$ then the standard basis for $I_S$ is
	$$G=\{f_{1,j},f_2,f_3,f_4,g_{j,i_j} \}$$
	where 
$f_{1,j}=X_1^{\alpha_1+j\alpha_{21}}-X_2^{j\alpha_2}X_3X_4^{\alpha_4-(j+1)}$, $j=0,1,\hdots, \alpha_4-1$

$g_{j,i_j}=X_2^{((\alpha_4-1)i_j+j)\alpha_2+1}X_4^{\alpha_4-(j+1)}-X_1^{i_j\alpha_1+((\alpha_4-1)i_j+j+1)\alpha_{21}+1}X_3^{\alpha_3-(i_j+1)}$ $j=0,1,2,\hdots,\alpha_4-1$  and $i_j=s_{j-1}, s_{j-1}+1, s_{j-1}+2,\hdots,s_j-1, s_j$. $s_{-1}=0$ and $s_j$ is the smallest integer with $((\alpha_4-1)s_j+j)\alpha_2+\alpha_4-j< s_j\alpha_1+((\alpha_4-1)s_j+j+1)\alpha_{21}+\alpha_3-s_j$
\end{theorem}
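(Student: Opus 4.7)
The approach is to verify Buchberger's criterion for the local ordering specified in the excerpt: every element of $G$ must lie in $I_S$, and every S-polynomial $\mathrm{spoly}(p, q)$, $p, q \in G$, must have normal form zero modulo $G$. Remark \ref{cox} then gives the conclusion. Thanks to the primality of the toric ideal $I_S$ (the kernel of $\phi$), it will suffice to exhibit each $g_{j, i_j}$ as a factor of a member of $I_S$ to deduce $g_{j, i_j} \in I_S$.

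For the $f_{1, j}$, induction on $j$ works: after checking the base case $f_{1, 0} = f_1$, a direct expansion confirms the identity
\[
f_{1, j} \;=\; X_1^{\alpha_{21}}\, f_{1, j-1} \;-\; X_2^{(j-1)\alpha_2}\, X_3\, X_4^{\alpha_4 - j - 1}\, f_2,
\]
so $f_{1, j} \in I_S$. The first decisive S-polynomial computation is
\[
\mathrm{spoly}(f_{1, j},\, f_3) \;=\; X_1^{\alpha_1 - \alpha_{21} - 1}\, g_{j, 0},
\]
obtained after cancelling the common term $X_2^{j\alpha_2} X_3^{\alpha_3} X_4^{\alpha_4-(j+1)}$; this both forces $g_{j, 0}$ into the basis and (by primality of $I_S$) proves $g_{j, 0} \in I_S$. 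Inductively on $i_j$, for $0 \le i_j < s_j$ the leading monomial of $g_{j, i_j}$ is $X_1^{i_j\alpha_1 + ((\alpha_4-1)i_j+j+1)\alpha_{21}+1} X_3^{\alpha_3-(i_j+1)}$, and $\mathrm{spoly}(g_{j, i_j},\, f_3)$ reduces modulo $G$—by rewriting the high power of $X_1$ through $f_1$ and then combining via Proposition \ref{shortcut}—to a monomial multiple of $g_{j, i_j + 1}$, again giving $g_{j, i_j + 1} \in I_S$.

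The chain terminates at $i_j = s_j$: by the defining inequality of $s_j$, the leading monomial of $g_{j, s_j}$ switches to $X_2^{((\alpha_4-1)s_j + j)\alpha_2 + 1} X_4^{\alpha_4 - (j+1)}$, and the corresponding S-polynomial with $f_3$ now reduces all the way to zero through $f_{1, j+1}$ and $f_2$, producing no new basis element and in particular closing the inductive step from $j$ to $j+1$. All remaining S-polynomials—pairs involving $f_4$, cross-pairs for differing $j$, and pairs with $f_2$ not yet handled—take the form $m_1^k - m_2^k$ for monomials with $m_1 - m_2 \in G$, and so reduce to zero by Proposition \ref{shortcut}.

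The main obstacle is the case analysis around the index $s_j$. Because the leading monomial of $g_{j, i_j}$ flips at $i_j = s_j$, the S-polynomial reductions split into two regimes, and the subtlest step is demonstrating that at the switch point the next S-polynomial reduces fully modulo $G$ rather than forcing a new basis element. It is precisely this termination phenomenon that couples the technical definition of $s_j$ to the exact size of the standard basis; once this is verified the rest of the argument is bookkeeping, handled uniformly by Proposition \ref{shortcut}.
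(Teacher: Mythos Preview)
Your high-level strategy matches the paper's: verify the S-polynomial criterion with respect to the negative degree ordering, using Proposition~\ref{shortcut} to dispatch the easy cases. But the two mechanisms you describe as driving the argument are incorrect, and the sweeping final sentence hides most of the actual work.

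First, the claim that $\mathrm{spoly}(g_{j,i_j},f_3)$ reduces to a monomial multiple of $g_{j,i_j+1}$ is wrong. Under the hypotheses $(1)$--$(4)$ we have $\mathrm{LM}(f_{1,0})=X_3X_4^{\alpha_4-1}$, not $X_1^{\alpha_1}$, so you cannot ``rewrite the high power of $X_1$ through $f_1$'' inside the normal-form algorithm. In fact $\mathrm{spoly}(g_{j,i_j},f_3)$ for $i_j<s_j$ reduces all the way to zero via $f_{1,j}$ and then $f_{1,\alpha_4-1}$ (using Proposition~\ref{shortcut}); no $g_{j,i_j+1}$ appears. The passage between consecutive $i$-indices is instead governed by $f_{1,\alpha_4-1}$: one has $\mathrm{spoly}(g_{j,s_j},f_{1,\alpha_4-1})=X_1^{\alpha_1+(\alpha_4-1)\alpha_{21}}g_{j,s_j-1}$, and the passage $j\mapsto j+1$ is $\mathrm{spoly}(g_{j,s_j},f_2)=g_{j+1,s_j}$. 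Your narrative for how the $g$'s enter the basis therefore does not match the actual reduction graph, and the termination argument you sketch at $i_j=s_j$ is aimed at the wrong S-pair.

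Second, the blanket assertion that ``all remaining S-polynomials take the form $m_1^k-m_2^k$'' is false. Several of the hardest cases---$\mathrm{spoly}(g_{j,s_j},f_4)$, $\mathrm{spoly}(g_{j,i_j},f_{1,j})$, $\mathrm{spoly}(g_{j_1,i_{j_1}},g_{j_2,i_{j_2}})$ with $j_1\ne j_2$, and $\mathrm{spoly}(g_{j_1,s_{j_1}},f_{1,j_2})$---require an inductive sequence of reductions by $f_2$ or $f_{1,\alpha_4-1}$ (with a genuine case split on which term is leading) \emph{before} a difference of $k$-th powers appears; only then does Proposition~\ref{shortcut} apply. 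These case analyses, together with Lemma~\ref{lmf1j} to identify leading monomials and Lemma~\ref{NFg} to handle the $g_{j,m}$ outside the range $s_{j-1}\le m\le s_j$, are the substance of the proof and cannot be absorbed into a single ``bookkeeping'' remark.
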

To prove the theorem, we will need the following lemmas and remark.

	\begin{lemma}\label{lmf1j} 
		$j\alpha_2+\alpha_4< \alpha_1+j\alpha_{21}+j$ for all $j=0,1,\hdots,\alpha_4-1$ and hence ${\rm LM}(f_{1,j})=X_2^{j\alpha_2}X_3X_4^{\alpha_4-(j+1)}$ for all $j$, too.
\end{lemma}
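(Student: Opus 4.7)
My plan is to translate the binomial $f_{1,j}$ into the semigroup identity it encodes and then compare the two total degrees using the strict inequalities $n_1<n_2<n_3<n_4$. First I would note that $f_{1,j}\in I_S$ (one checks this quickly by writing $X_1^{\alpha_1+j\alpha_{21}}\equiv X_1^{j\alpha_{21}}X_3X_4^{\alpha_4-1}\pmod{f_1}$ and then reducing $X_3X_4^{\alpha_4-(j+1)}\bigl((X_2^{\alpha_2})^j-(X_1^{\alpha_{21}}X_4)^j\bigr)$ to $0$ modulo $f_2$ via Proposition \ref{shortcut}). Consequently the two monomials of $f_{1,j}$ have the same $S$-weight, i.e.\
\[
(\alpha_1+j\alpha_{21})\,n_1 \;=\; j\alpha_2\,n_2 \;+\; n_3 \;+\; (\alpha_4-j-1)\,n_4.
\]

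Next I would bound the right-hand side from below by replacing each $n_i$ on the right by $n_1$. Because $n_3>n_1$ strictly and the coefficient of $n_3$ here is exactly $1$, this replacement yields a strict inequality
\[
(\alpha_1+j\alpha_{21})\,n_1 \;>\; (j\alpha_2+1+\alpha_4-j-1)\,n_1 \;=\; (j\alpha_2+\alpha_4-j)\,n_1,
\]
valid for every $j\in\{0,1,\dots,\alpha_4-1\}$; the $n_3$ summand is never absent, so strictness is automatic whether $j=0$ (making the $n_2$ term vanish) or $j=\alpha_4-1$ (making the $n_4$ term vanish). Dividing by $n_1$ gives $\alpha_1+j\alpha_{21}>j\alpha_2+\alpha_4-j$, which rearranges to the desired $j\alpha_2+\alpha_4<\alpha_1+j\alpha_{21}+j$.

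For the ``hence'' clause, I would invoke that the standard basis in this paper is computed with respect to a local (negative-degree) monomial ordering, under which the summand of smaller total degree is the leading term. The two monomials of $f_{1,j}$ have total degrees $\alpha_1+j\alpha_{21}$ and $j\alpha_2+1+(\alpha_4-j-1)=j\alpha_2+\alpha_4-j$, respectively; the inequality just proved shows that $X_2^{j\alpha_2}X_3X_4^{\alpha_4-(j+1)}$ has the strictly smaller degree, so ${\rm LM}(f_{1,j})=X_2^{j\alpha_2}X_3X_4^{\alpha_4-(j+1)}$. There is no real obstacle; the only thing to watch is the strictness of the bound, which is forced by $n_3>n_1$, and this is precisely why the argument works uniformly over all admissible $j$.
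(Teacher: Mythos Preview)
Your proof is correct and takes a genuinely different route from the paper's. The paper works directly with the explicit Komeda formulas for $n_1$ and $n_2$: it expands the inequality $n_1<n_2$, uses condition~(2) (i.e.\ $\alpha_1-\alpha_{21}-\alpha_3>0$) to discard a positive term, cancels $\alpha_3$, and thereby obtains the case $j=\alpha_4-1$; the remaining $j$ are then deduced by a short monotonicity argument in $j$. You instead read the semigroup identity $(\alpha_1+j\alpha_{21})n_1=j\alpha_2\,n_2+n_3+(\alpha_4-j-1)n_4$ off of the membership $f_{1,j}\in I_S$ and replace every $n_i$ on the right by the multiplicity $n_1$, with strictness guaranteed by the ever-present $n_3$ summand. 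Your approach is shorter, handles all $j$ uniformly without isolating the extremal case, and makes transparent \emph{why} the lower-degree monomial is the one not involving $X_1$: it is built from the larger generators. The paper's computation, on the other hand, pinpoints exactly which of the ordering hypotheses are needed (only $n_1<n_2$ together with condition~(2)) and does not require first checking that $f_{1,j}\in I_S$.
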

\begin{proof}
	Since $n_1<n_2$, we have 
	\begin{eqnarray*}
		\alpha_2\alpha_3(\alpha_4-1)+1&<&\alpha_{21}\alpha_3\alpha_4+(\alpha_1-\alpha_{21}-1)(\alpha_3-1)+\alpha_3 \\
		\alpha_1-\alpha_{21}+\alpha_3(\alpha_{21}+\alpha_2\alpha_4)&<&\alpha_3(\alpha_2+\alpha_{21}\alpha_4+\alpha_1) \\
		\alpha_1-\alpha_3-\alpha_{21}+\alpha_3(\alpha_{21}+\alpha_2\alpha_4+1)&<&\alpha_3(\alpha_2+\alpha_{21}\alpha_4+\alpha_1)
	\end{eqnarray*}
Since $\alpha_1-\alpha_3-\alpha_{21}>0$, we have $\alpha_3(\alpha_{21}+\alpha_2\alpha_4+1)<\alpha_3(\alpha_2+\alpha_{21}\alpha_4+\alpha_1)$, and cancelling out $\alpha_3$, we obtain $$\alpha_2(\alpha_4-1)+1< \alpha_{21}(\alpha_4-1)+\alpha_1$$
 or equivalently, $(\alpha_2-\alpha_{21})(\alpha_4-1)+1 < \alpha_1$. To obtain the same inequality for $j<\alpha_4-1$, 
 \begin{eqnarray*}
 	 (\alpha_2-\alpha_{21})(\alpha_4-1)+1 &<& \alpha_1\\
 	  (\alpha_2-\alpha_{21}-1)(\alpha_4-1)+\alpha_4 &<& \alpha_1\\
 \end{eqnarray*}
Since $j<\alpha_4-1$ and $\alpha_2-\alpha_{21}-1>0$, we have $(\alpha_2-\alpha_{21}-1)j+\alpha_4 < \alpha_1$ or equivalently,
$$j\alpha_2+\alpha_4<\alpha_1+j\alpha_{21}+j$$
\end{proof}

Note that lemma \ref{lmf1j} is the generalization of remark (1.1) of \cite{Sahin}  and remark (3.1) of \cite{Sahin2}.
\begin{lemma}\label{NFg}
${\rm NF} (g_{j,m}|G)=0$ for any $m=0,1,\hdots, s_{\alpha_4-1}$	
\end{lemma}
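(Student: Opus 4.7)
The plan is to exhibit an explicit reduction of $g_{j,m}$ to zero using only three members of $G$: $f_2$, $g_{0,0}$, and $f_{1,0}$. The main tool is Proposition \ref{shortcut}, which packages a potentially long string of divisions by $f_2$ (and, later, by $f_{1,0}$) into a single telescoping step, thereby avoiding a direct induction on the $X_2$-exponent.

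Set $a=(\alpha_4-1)m+j$, so that the first term of $g_{j,m}$ reads $X_2^{a\alpha_2+1}X_4^{\alpha_4-j-1}$. For the first pass I would apply Proposition \ref{shortcut} with $m_1=X_2^{\alpha_2}$, $m_2=X_1^{\alpha_{21}}X_4$, $k=a$, and $g=f_2$. Multiplying the resulting reduction sequence by the carrier monomial $X_2X_4^{\alpha_4-j-1}$, and invoking the identity $a+\alpha_4-j-1=(\alpha_4-1)(m+1)$, this rewrites the first term of $g_{j,m}$ and leaves the intermediate polynomial
\[
X_1^{a\alpha_{21}}X_2X_4^{(\alpha_4-1)(m+1)}-X_1^{m\alpha_1+(a+1)\alpha_{21}+1}X_3^{\alpha_3-(m+1)}.
\]

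For the second pass I would perform a single reduction by $g_{0,0}=X_2X_4^{\alpha_4-1}-X_1^{\alpha_{21}+1}X_3^{\alpha_3-1}$, rewriting the factor $X_2X_4^{\alpha_4-1}$ sitting inside the first term. Factoring the common piece $X_1^{(a+1)\alpha_{21}+1}X_3^{\alpha_3-(m+1)}$ out of the result yields
\[
X_1^{(a+1)\alpha_{21}+1}X_3^{\alpha_3-(m+1)}\left[(X_3X_4^{\alpha_4-1})^m-(X_1^{\alpha_1})^m\right].
\]

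For the third pass I would apply Proposition \ref{shortcut} a second time, now with $m_1=X_1^{\alpha_1}$, $m_2=X_3X_4^{\alpha_4-1}$, $k=m$, and $g=f_{1,0}\in G$; this reduces the bracketed factor to zero and hence sends the whole polynomial to zero. The only delicate point is checking that the successive rewrites really compose into a single valid normal-form computation, i.e.\ that at every elementary division the chosen leading monomial genuinely divides a term of the current remainder; this is precisely the divisibility pattern supplied by the proof of Proposition \ref{shortcut}. Since the argument never uses a specific feature of $m$ or $j$, the conclusion ${\rm NF}(g_{j,m}|G)=0$ holds uniformly for every $m\in\{0,1,\ldots,s_{\alpha_4-1}\}$.
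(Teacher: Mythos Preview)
Your algebra is sound as an \emph{ideal-membership} calculation --- one can indeed write $g_{j,m}$ as a combination of $f_2$, $g_{0,0}$ and $f_{1,0}$ along the lines you sketch --- but it does not constitute an NFM\textsc{ora} reduction, and that is what the lemma requires. The problem is the very first pass. Under the standing hypothesis $\alpha_2>\alpha_{21}+1$ one has ${\rm LM}(f_2)=X_1^{\alpha_{21}}X_4$, \emph{not} $X_2^{\alpha_2}$; hence in Proposition~\ref{shortcut} the role of $m_1$ is played by $X_1^{\alpha_{21}}X_4$. A normal-form step against $f_2$ therefore needs $X_1^{\alpha_{21}}X_4\mid{\rm LM}(h)$, but neither monomial of $g_{j,m}$ is divisible by $X_1^{\alpha_{21}}X_4$: the term $X_2^{a\alpha_2+1}X_4^{\alpha_4-j-1}$ has no $X_1$, and the term $X_1^{m\alpha_1+(a+1)\alpha_{21}+1}X_3^{\alpha_3-(m+1)}$ has no $X_4$. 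So $f_2$ cannot initiate the reduction at all, and your ``carrier monomial'' trick is replacing the non-leading monomial of $f_2$, which is not a legal normal-form move. The same obstruction hits your other two reducers: ${\rm LM}(g_{0,0})=X_2X_4^{\alpha_4-1}$ and ${\rm LM}(f_{1,0})=X_3X_4^{\alpha_4-1}$ both involve $X_4^{\alpha_4-1}$, so for every $j\ge 1$ none of your three elements can even touch the leading monomial of $g_{j,m}$.

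This matters because Lemma~\ref{NFg} is invoked \emph{inside} the Buchberger-criterion proof of Theorem~\ref{standardbasis}; at that point $G$ is not yet known to be a standard basis, so ``$g_{j,m}\in I_S$'' does not automatically yield ${\rm NF}(g_{j,m}\mid G)=0$ via Remark~\ref{cox}. The paper's proof avoids the issue by splitting into three ranges and starting the reduction with an element of $G$ whose leading monomial genuinely divides ${\rm LM}(g_{j,m})$: for $s_{j-1}\le m\le s_j$ the polynomial is already in $G$; for $m>s_j$ one reduces first by $g_{j,s_j}\in G$ (whose leading monomial $X_2^{((\alpha_4-1)s_j+j)\alpha_2+1}X_4^{\alpha_4-(j+1)}$ divides ${\rm LM}(g_{j,m})$) and only then invokes Proposition~\ref{shortcut} with $f_{1,\alpha_4-1}$; for $m<s_{j-1}$ one reduces by $g_{j-1,m}\in G$ and is left with a monomial multiple of $f_2$. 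Your argument could be repaired by adopting this case split and using the appropriate $g_{j,s_j}$ or $g_{j-1,m}$ to get started.
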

\begin{proof}
	
	For $s_{j-1}\leq m\leq s_j$, $g_{j,m}\in G$ and hence the result is clear. 
	
	For $m > s_{j}$,	$T_{g_{j,m}}=\{g_{j,s_{j}}\}$ and ${\rm spoly}(g_{j,m},g_{j,s_{j}})=$ $X_1^{(s_{j})\alpha_1+((\alpha_4-1)(s_{j})+j+1)\alpha_{21}+1}X_3^{\alpha_3-(m+1)}$ $\left[ (X_1^{\alpha_1+(\alpha_4-1)\alpha_{21}})^{m-s_{j}}\right. $ $\left. -(X_2^{(\alpha_4-1)\alpha_{21}}X_3)^{m-s_{j}} \right]$ $=r_1 $. Since the monomials inside the paranthesis are $m-s_j$th powers of the monomials in $f_1,\alpha_4-1$, by lemma \ref{shortcut} and remark \ref{cox}, ${\rm NF} (g_{j,m}|G)=0$.

	For $m<s_{j-1}$, ${\rm NF} (g_{j,m}|G)=0$ as ${\rm spoly}(g_{j,m},g_{j-1,m})=X_2^{((\alpha_4-1)m+j-1)\alpha_2+1}X_4^{\alpha_4-(j+1)}f_2$ and remark \ref{cox} gives the result.
\end{proof}
\begin{remark}\label{j=0}
	If $j=0$ then $0$ is the smallest integer satisfying $((\alpha_4-1)s_j+j)\alpha_2+\alpha_4-j< s_j\alpha_1+((\alpha_4-1)s_j+j+1)\alpha_{21}+\alpha_3-s_j$ by remark 1.1 of \cite{Sahin}. That is $s_0=0$. Futhermore, when $j=0$ then $g_{0,s_0}=g_{0,0}=f_5$ in \cite{komeda}.
\end{remark}

\begin{proof}
We will use NFM\tiny{ORA} \normalsize as the normal form.
	\begin{itemize}
		\item $NF({\rm spoly}(f_2,f_3) \vert G)=0$, $NF({\rm spoly}(f_3,f_4) \vert G)=0$, $NF({\rm spoly}(f_3,f_5) \vert G)=0$ as the leading monomials are relatively prime.	
		
		\item $NF({\rm spoly}(f_2,f_4) \vert G)=0$ as ${\rm spoly}(f_2,f_4)=X_2^{\alpha_2-1}g_{0,0}$.
		
		
	\end{itemize}
Normal forms of the s-polynomials with $f_{1,j}$ will be investigated in two cases: When $j=\alpha_4-1$ and $j<\alpha_4-1$ since the leading monomial of $f_{1,j}$ change in these two cases. 
\begin{itemize}
	\item If $j=\alpha_4-1$, then $NF({\rm spoly}(f_{1,{\alpha_4-1}},f_2) \vert G)=0$ and $NF({\rm spoly}(f_{1,\alpha_4-1},f_4) \vert G)=0$ as the leading monomials are relatively prime. , 
	
	\noindent${\rm NF}({\rm spoly}(f_{1,\alpha_4-1},f_3) \vert G)=0$ as ${\rm spoly}(f_{1,\alpha_4-1},f_3)=X_1^{\alpha_1-\alpha_{21}-1}g_{\alpha_4-1,0}$ and lemma \ref{NFg} gives the result. 
	
	\item If $j<\alpha_4-1$, then
	
	 ${\rm NF}({\rm spoly}(f_{1,j},f_2) \vert G)=0$ as ${\rm spoly}(f_{1,j},f_2)=f_{1,j+1}\in G$.
	
	 ${\rm NF}({\rm spoly}(f_{1,j},f_3) \vert G)=0$ as  ${\rm spoly}(f_{1,j},f_3)=X_1^{\alpha_1-\alpha_{21}-1}g_{j,0}$ and $g_{j,0}\in G$
	 
	  ${\rm NF}({\rm spoly}(f_{1,j},f_4) \vert G)=0$, as ${\rm spoly}(f_{1,j},f_4)=X_1(X_2^{(j+1)\alpha_2-1}X_3^{\alpha_3}-X_1^{\alpha_1+j\alpha_{21}-1}X_4^{j+1})=r_1$. 
	  \begin{itemize}
	  	\item If ${\rm LM}(r_1)=X_1X_2^{(j+1)\alpha_2-1}X_3^{\alpha_3}$ then $T_{r_1}=\{f_3\}$ and ${\rm spoly}(f_3,r_1)=X_1^{\alpha_1-\alpha_{21}}\left[ X_2^{(j+1)\alpha_2}-X_1^{(j+1)\alpha_{21}}X_4^{j+1}\right]=r_2 $. Since the monomials inside the paranthesis are $j+1$th powers of the monomials of $f_2$, proposition \ref{shortcut} and remark \ref{cox} gives the result.     
  	    \item If ${\rm LM}(r_1)=X_1^{\alpha_1+j\alpha_{21}}X_4^{j+1}$ then $T_{r_1}=\{f_2\}$ and ${\rm spoly}(f_2,r_1)=X_1X_2^{\alpha_2}\left[X_1^{\alpha_1+(j-1)\alpha_{21}-1}X_4^{j}-X_2^{j\alpha_2-1}X_3^{\alpha_3} \right]=r_2 $. Continuing inductively, $T_{r_{j+1}}=\{f_2\}$ and $r_{j+2}={\rm spoly}(f_3,r_{j+1})=X_1X_2^{(j+2)\alpha_2-1}f_3$. Then remark \ref{cox} gives the result.
	  \end{itemize}
	  

	\item  ${\rm NF}({\rm spoly}(f_{1,j_1},f_{1,j_2}) \vert G)=0$ for all $0\leq j_1<j_2\leq \alpha_4-1$. Indeed,  proposition \ref{shortcut} and remark \ref{cox} and the fact that ${\rm spoly}(f_{1,j_1},f_{1,j_2})=X_1^{\alpha_1+j_1\alpha_{21}}\left[X_2^{(j_2-j_1)\alpha_2}-X_1^{(j_2-j_1)\alpha_{21}}X_4^{j_2-j_1} \right] $, gives the result.
	
\end{itemize}

\noindent Normal forms of the s-polynomials with $g_{j,{i_j}}$ will be investigated in two cases: When $i_j=s_j$ and $i_j<s_j$ since the leading monomial of $g_{j,{i_j}}$ change in these two cases.
\begin{itemize}
	\item If  $i_j<s_j$:
	
 ${\rm NF}({\rm spoly}(g_{j,i_j},f_4) \vert G)=0$ ,  $NF({\rm spoly}(g_{j,i_j},g_{j,s_j}) \vert G)=0$ as the leading monomials are relatively prime.

${\rm NF}({\rm spoly}(g_{j,i_j},f_2)|G)=0$ as $ {\rm spoly}(g_{j,i_j},f_2)=X_2^{\alpha_2}g_{j-1,i_j}$ and since ${\rm NF} (g_{j-1,i_j}|G)=0$ by lemma \ref{NFg}.

${\rm NF}({\rm spoly}(g_{j,i_j},f_3)|G)=0$ as ${\rm spoly}(g_{j,i_j},f_3)=X_2\left[ X_1^{(i_j+1)\alpha_1+((\alpha_4-1)i_j+j)\alpha_{21}}-X_2^{((\alpha_4-1)i_j+j)\alpha_{2}}X_3^{i_j+1}X_4^{\alpha_4-(j+1)}\right]$ $=r_1 $.
  ${\rm LM}(r_1)=X_2^{((\alpha_4-1)i_j+j)\alpha_{2}+1}X_3^{i_j+1}X_4^{\alpha_4-(j+1)}$ by lemma \ref{lmf1j} and $T_{r_1}=\{f_{1,j}\}$. Then 
	
	${\rm spoly}(f_{1,j},r_1)=X_1^{\alpha_1+j\alpha_{21}}X_2\left[ X_1^{i_j\alpha_1+(\alpha_4-1)i_j\alpha_{21}}-X_2^{(\alpha_4-1)i_j\alpha_2}X_3^{i_j} \right]=r_2 $. Since the monomials inside the paranthesis are $i_j$th powers of the monomials of $f_{1,\alpha_4-1}$, the result follows from proposition \ref{shortcut} and remark \ref{cox}

	\item If  $i_j=s_j$:
	
${\rm NF}({\rm spoly}(g_{j,s_j},f_2)|G)=0$ as ${\rm spoly}(g_{j,s_j},f_2)=g_{j+1,s_j} \in G$
	
${\rm NF}({\rm spoly}(g_{j,s_j},f_3)|G)=0$ as the leading monomials are relatively prime.

${\rm NF}({\rm spoly}(g_{j,s_j},f_4)|G)=0$ as ${\rm spoly}(g_{j,s_j},f_4)=X_1^{}X_3^{\alpha_3-(s_j+1)}\left[ X_1^{s_j\alpha_1+((\alpha_4-1)s_j+j+1)\alpha_{21}}X_4^{j+1}-\right. $ $\left. X_2^{((\alpha_4-1)s_j+j+1)\alpha_2}X_3^{s_j}\right]=r_1 $. 
\begin{itemize}
	\item If  ${\rm LM}(r_1)=X_1^{s_j\alpha_1+((\alpha_4-1)s_j+j+1)\alpha_{21}+1}X_3^{\alpha_3-(s_j+1)}X_4^{j+1}$, then  $T_{r_1}=\{f_{2}\}$ and ${\rm spoly}(r_1,f_{2})=X_1^{}X_2^{\alpha_2}X_3^{\alpha_3-(s_j+1)}\left[ X_1^{s_j\alpha_1+((\alpha_4-1)s_j+j)\alpha_{21}}X_4^{j}-X_2^{((\alpha_4-1)s_j+j)\alpha_2}X_3^{s_j}\right] =r_2$. Continuing inductively, $r_{j+2}={\rm spoly}(r_{j+1},f_{2} )=X_1X_2^{(j+1)\alpha_2}X_3^{\alpha_3-{s_j+1}}\left[ X_1^{s_j\alpha_1+((\alpha_4-1)s_j)\alpha_{21}}-X_2^{((\alpha_4-1)s_j)\alpha_2}X_3^{s_j}\right] $. Since the monomials inside the paranthesis are $s_j$th powers of the monomials of $f_{1,\alpha_4-1}$, proposition \ref{shortcut} with remark \ref{cox} gives the result.
	\item If ${\rm LM}(r_1)= X_1X_2^{((\alpha_4-1)s_j+j+1)\alpha_2}X_3^{\alpha_3-1}$, then  $T_{r_1}=\{f_{1,\alpha_4-1}\}$ and ${\rm spoly}(r_1,f_{1,\alpha_4-1})=$
	\newline $X_1^{\alpha_1+(\alpha_4-1)\alpha_{21}+1}X_3^{\alpha_3-(s_j+1)}\left[ X_2^{((\alpha_4-1)(s_j-1)+j+1)\alpha_2}X_3^{s_j-1}-X_1^{(s_j-1)\alpha_1+((\alpha_4-1)(s_j-1)+j+1)\alpha_{21}}X_4^{j+1}\right] =r_2$. Continuing inductively,
	
	 $r_{s_j+1}={\rm spoly}(r_{s_j},f_{1,\alpha_4-1} )= X_1^{(\alpha_1+(\alpha_4-1)\alpha_{21})s_j+1}X_3^{\alpha_3-(s_j+1)}  \left[ X_2^{(j+1)\alpha_2}-X_1^{(j+1)\alpha_{21}X_4^{j+1}}\right] $. Since the monomials inside the paranthesis are $j+1$th powers of the monomials of $f_{2}$, proposition \ref{shortcut} with remark \ref{cox} gives the result.
\end{itemize}

\item ${\rm NF}({\rm spoly}(g_{j,i_{j}},f_{1,{j}})|G)=0$ for $j=0,1,\hdots,\alpha_4-1$. Indeed,

${\rm spoly}(g_{j,i_{j}},f_{1,{j}})=X_2^{((\alpha_4-1)i_j+2j)\alpha_2+1}X_4^{2\alpha_4-2(j+1)}-X_1^{(i_j+1)\alpha_1+((\alpha_4-1)i_j+2j+1)\alpha_{21}+1}X_3^{\alpha_3-(i_j+2)} =r_1$.  Since $i_j\geq s_{j-1}$, by the definition of $s_j$ we have 
$$((\alpha_4-1)i_j+j-1)\alpha_2+\alpha_4-j+1< i_j\alpha_1+((\alpha_4-1)i_j+j)\alpha_{21}+\alpha_3-i_j$$
Also, Lemma \ref{lmf1j} gives, 
$$(j+1)\alpha_2+\alpha_4<\alpha_1+(j+1)\alpha_{21}+j+1$$
Adding these two we obtain
${\rm LM}(r_1)=X_2^{((\alpha_4-1)i_j+2j)\alpha_2+1}X_4^{2\alpha_4-2(j+1)}$ and $T_{r_1}=\left\lbrace g_{0,0}\right\rbrace $. Then $r_2={\rm spoly}(r_1,g_{0,0})= X_1^{(i_j+1)\alpha_1+((\alpha_4-1)i_j+2j+1)\alpha_{21}+1}X_3^{\alpha_3-(i_j+2)}-X_1^{\alpha_{21}+1}X_2^{((\alpha_4-1)i_j+2j)\alpha_2}X_3^{\alpha_3-1}X_4^{\alpha_4-2j-1}$. Then $T_{r_2}=\left\lbrace f_{1,2j}\right\rbrace $ and $r_3={\rm spoly}(r_2,f_{1,2j})=X_1^{\alpha_1+(2j+1)\alpha_{21}+1}X_3^{\alpha_3-(i_j+2)}\left[ X_1^{i_j\alpha_1+(\alpha_4-1)i_j\alpha_{21}}-X_2^{(\alpha_4-1)i_j\alpha_2}X_3^{i_j} \right] $. Since the monomials inside the paranthesis are $i_j$th powers of the monomials inside $f_{1,\alpha_4-1}$, the result follows from remark \ref{cox} and proosition \ref{shortcut}.

\item ${\rm NF}({\rm spoly}(g_{j,s_j},g_{j,i_j})|G)=0$ as the leading monomials are relatively prime.
\item ${\rm NF}({\rm spoly}(g_{j,s_j},f_{1,j})|G)=0$ as ${\rm spoly}(g_{j,s_j},f_{1,j})=X_1^{\alpha_1+j\alpha_{21}}g_{\alpha_4-1,s_{j-1}}$,  remark \ref{cox} and lemma \ref{NFg} gives the result. 
\item ${\rm NF}({\rm spoly}(g_{j,s_j},f_{1,\alpha_4-1})|G)=0$ as ${\rm spoly}(g_{j,s_j},f_{1,\alpha_4-1})=X_1^{\alpha_1+(\alpha_4-1)\alpha_{21}}g_{j,s_j-1}$ and remark \ref{cox} gives the result.

\item ${\rm NF}({\rm spoly}(g_{j,n},g_{j,m})|G)=0$ for any $n<m<s_j$. Indeed, ${\rm spoly}(g_{j,n},g_{j,m})=X_2^{((\alpha_4-1)n+j)\alpha_2+1}X_4^{\alpha_4-(j+1)}\left[ \right. $ $\left.  X_1^{(\alpha_1+(\alpha_4-1)\alpha_{21})(m-n)}-X_2^{(\alpha_4-1)\alpha_{21}(m-n)}X_3^{m-n}\right] $ and monomials inside the paranthesis are $(m-n)$th powers of the monomials of $f_{1,\alpha_4-1}$. Then the result follows from proposion \ref{shortcut} and remark \ref{cox}.

\item ${\rm NF}({\rm spoly}(g_{j_1,s_{j_1}},g_{j_2,i_{j_2}})|G)=0$ as the leading monomials are relatively prime.

\item ${\rm NF}({\rm spoly}(g_{j_1,i_{j_1}},g_{j_2,i_{j_2}})|G)=0$ since we have ${\rm spoly}(g_{j_1,i_{j_1}},g_{j_2,i_{j_2}})=X_2^{((\alpha_4-1)i_{j_1}+j_1)\alpha_2+1}X_4^{\alpha_4-(j_2+1)} \left[  \right. $ \newline $\left.  X_1^{(i_{j_2}-i_{j_1})\alpha_1+((\alpha_4-1)(i_{j_2}-i_{j_1})+(j_2-j_1))\alpha_{21}}X_4^{j_2-j_1}- X_2^{((\alpha_4-1)(i_{j_2}-i_{j_1})+(j_2-j_1))\alpha_2}X_3^{i_{j_2}-i_{j_1}}\right]=r_1 $ and ${\rm LM}(r_1)=X_1^{(i_{j_2}-i_{j_1})\alpha_1+((\alpha_4-1)(i_{j_2}-i_{j_1})+(j_2-j_1))\alpha_{21}}  X_2^{((\alpha_4-1)i_{j_1}+j_1)\alpha_2+1}  X_4^{\alpha_4-(j_1+1)}$ 
then $T_{r_1}=\{f_2\}$ and ${\rm spoly}(r_1,f_2)=X_2^{((\alpha_4-1)i_{j_1}+j_1+1)\alpha_2+1}X_4^{\alpha_4-(j_2+1)} \left[  \right. $ $\left.  X_1^{(i_{j_2}-i_{j_1})\alpha_1+((\alpha_4-1)(i_{j_2}-i_{j_1})+(j_2-(j_1+1)))\alpha_{21}}X_4^{j_2-(j_1+1)}-\right.$ \newline$ \left.  X_2^{((\alpha_4-1)(i_{j_2}-i_{j_1})+(j_2-(j_1+1)))\alpha_2}X_3^{i_{j_2}-i_{j_1}}\right]=r_2 $. Continuing inductively, $r_{j_2-j_1+1}={\rm spoly}(r_{j_2-j_1},f_2)=X_2^{((\alpha_4-1)i_{j_1}+j_2)\alpha_2+1}X_4^{\alpha_4-(j_2+1)} \left[ X_1^{(i_{j_2}-i_{j_1})\alpha_1+((\alpha_4-1)(i_{j_2}-i_{j_1}))\alpha_{21}}- X_2^{((\alpha_4-1)(i_{j_2}-i_{j_1}))\alpha_2}X_3^{i_{j_2}-i_{j_1}}\right]$. Since the monomials inside the paranthesis are $(i_{j_2}-i_{j_1})$th powers of the monomials of $f_{1,\alpha_4-1}$, the result follows by proposition \ref{shortcut} and remark \ref{cox}.

\item${\rm NF}({\rm spoly}(g_{j_1,s_{j_1}},f_{1,{j_2}})|G)=0$  as ${\rm spoly}(g_{j_1,s_{j_1}},f_{1,{j_2}})=X_1^{\alpha_1+j_2\alpha_{21}}\left[ X_1^{(s_{j_1}-1)\alpha_1+((\alpha_4-1)s_{j_1}+j_1-j_2+1)\alpha_{21}+1}X_3^{\alpha_3-s_{j_1}}-\right.$ $ \left. X_2^{((\alpha_4-1)s_{j_1}+j_1-j_2)\alpha_2+1}X_4^{j_2-j_1}\right] =r_1$. 
\begin{itemize}
	\item If ${\rm LM}(r_1)=X_1^{\alpha_1+j_2\alpha_{21}}X_2^{((\alpha_4-1)s_{j_1}+j_1-j_2)\alpha_2+1}X_4^{j_2-j_1}$, then $T_{r_1}=\left\lbrace f_2\right\rbrace $ and ${\rm spoly}(r_1,f_2)=X_1^{\alpha_1+(j_2-1)\alpha_{21}}\left[ X_1^{(s_{j_1}-1)\alpha_1+((\alpha_4-1)s_{j_1}+j_1-j_2+2)\alpha_{21}+1}X_3^{\alpha_3-s_{j_1}}-\right.$ $ \left. X_2^{((\alpha_4-1)s_{j_1}+j_1-j_2+1)\alpha_2+1}X_4^{j_2-j_1-1}\right] =r_2$. Continuing inductively, $r_{j_2-j_1-1}=X_1^{\alpha_1+j_1\alpha_{21}}g_{\alpha_4-1,s_j-1}$ then remark \ref{cox} gives the result.
	
	\item If ${\rm LM}(r_1)= X_1^{(s_{j_1})\alpha_1+((\alpha_4-1)s_{j_1}+j_1+1)\alpha_{21}+1}X_3^{\alpha_3-s_{j_1}}$, $T_{r_1}=\{ g_{j_1,s_{j_1}-1}\}$ and ${\rm spoly}(r_1, g_{j_1,s_{j_1}-1})=X_1^{\alpha_1+j_2\alpha_{21}}X_2^{(\alpha_4-1)(s_{j_1-1}+j_1-j_2)\alpha_2+1}X_4^{j_2-j_1}\left[X_2^{(\alpha_4-1-j_2)\alpha_2}-X_1^{(\alpha_4-1-j_2)\alpha_{21}}X_4^{(\alpha_4-1-j_2)} \right] $. Since the monomials appearing inside the paranthesis are $(\alpha_4-1-j_2)$ th powers of the monomials in $f_2$, the result follows from lemma \ref{shortcut} and remark \ref{cox}.

\end{itemize}

\end{itemize}
\end{proof}

\begin{corollary}
	$\{{f_{1,j}}_*,{f_2}_*,{f_3}_*,{f_4}_*,{g_{j,i_j}}_*\}$ is a standard basis for ${I_S}_*$ for $j=0,1,\hdots,\alpha_4-1$ and $i_j=s_{j-1},\hdots,s_j$ where ${f_{1,j}}_*=X_2^{j\alpha_2}X_3X_4^{\alpha_4-(j+1)}$, 
	${f_2}_*=X_1^{\alpha_{21}}X_4$, 
	${f_3}_*=X_3^{\alpha_3}$, 
	${f_4}_*=X_4^{\alpha_4}$ and 
	${g_{j,i_j}}_*=X_1^{i_j\alpha_1+((\alpha_4-1)i_j+j+1)\alpha_{21}+1}X_3^{\alpha_3-(i_j+1)}$ for $i_j<s_j$ and ${g_{j,s_j}}_*=X_2^{((\alpha_4-1)s_j+j)\alpha_2+1}X_4^{\alpha_4-(j+1)}$. Since $X_1 | {f_2}_*$ the tangent cone is not Cohen-Macaulay.
\end{corollary}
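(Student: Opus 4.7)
The plan has two parts: establishing that the listed initial forms form a standard basis of $I_S^*$, and deducing failure of the Cohen-Macaulay property. For the first part, I would invoke the standard theorem relating standard bases under a local (negative) degree ordering to standard bases of the tangent cone ideal (see \cite{greuel-pfister} and the algorithm used in \cite{bayer}): if $G = \{g_1,\ldots,g_r\}$ is a standard basis of $I$ with respect to a local degree ordering, then $\{g_1^*,\ldots,g_r^*\}$ is a standard basis of $I^*$ with respect to the corresponding global degree ordering. Applying this to the basis $G$ produced by Theorem \ref{standardbasis} immediately gives that the stated set is a standard basis of $I_S^*$, once each $g^*$ is identified.

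The identification step is a routine comparison of total degrees of the two monomials in each binomial. For $f_{1,j}$, Lemma \ref{lmf1j} gives $j\alpha_2 + \alpha_4 < \alpha_1 + j\alpha_{21} + j$, equivalently $j\alpha_2 + \alpha_4 - j < \alpha_1 + j\alpha_{21}$, so $X_2^{j\alpha_2}X_3X_4^{\alpha_4-(j+1)}$ is the lower-degree summand. Condition (4) $\alpha_2 > \alpha_{21}+1$ reads $\deg(X_1^{\alpha_{21}}X_4) < \deg(X_2^{\alpha_2})$, producing ${f_2}_* = X_1^{\alpha_{21}}X_4$; conditions (2) and (3) dispose of $f_3$ and $f_4$ analogously. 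For $g_{j,i_j}$, the very defining inequality of $s_j$, namely $((\alpha_4-1)s_j+j)\alpha_2 + \alpha_4 - j < s_j\alpha_1 + ((\alpha_4-1)s_j+j+1)\alpha_{21} + \alpha_3 - s_j$, is the comparison of degrees of the two monomials of $g_{j,s_j}$; by minimality of $s_j$ the opposite inequality holds for $i_j < s_j$, so the $X_1$-power monomial has smaller degree in that range, while at $i_j = s_j$ the $X_2$-power monomial takes over. This produces exactly the two branches of the formula for ${g_{j,i_j}}_*$.

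For the Cohen-Macaulay statement, the key observation is that ${f_2}_* = X_1^{\alpha_{21}}X_4 \in I_S^*$ is divisible by $X_1$. Since $R_S$ has Krull dimension one and $X_1$ corresponds to the multiplicity, $A/I_S^*$ is Cohen-Macaulay if and only if $X_1$ is a nonzerodivisor modulo $I_S^*$. If $X_1$ were such a nonzerodivisor, iterating $\alpha_{21}$ times on $X_1^{\alpha_{21}}X_4 \in I_S^*$ would force $X_4 \in I_S^*$; but $t^{n_1},\ldots,t^{n_4}$ are $K$-linearly independent in $K[t]$, so $I_S$ contains no linear forms, and hence $(I_S^*)_1 = 0$. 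This contradiction establishes that the tangent cone is not Cohen-Macaulay. The only point requiring genuine care in the whole argument is the $g_{j,i_j}$ case, where one must keep track of how the transition at $i_j = s_j$ matches the combinatorial range $s_{j-1} \leq i_j \leq s_j$ allowed in Theorem \ref{standardbasis} (with the convention $s_{-1}=0$); everything else is a direct application of degree comparisons already present in Lemma \ref{lmf1j} and conditions (2)--(4).
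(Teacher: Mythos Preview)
Your proposal is correct and matches the paper's implicit reasoning: the corollary is stated without proof, being an immediate consequence of Theorem~\ref{standardbasis} via the standard fact (from \cite{greuel-pfister}) that initial forms of a standard basis with respect to a local degree ordering give a standard basis of $I_S^*$, together with the degree comparisons from Lemma~\ref{lmf1j}, conditions (2)--(4), and the defining inequality for $s_j$. Your Cohen--Macaulay argument via the nonzerodivisor criterion for $X_1$ is a correct unpacking of the paper's one-line remark ``Since $X_1 \mid {f_2}_*$ the tangent cone is not Cohen--Macaulay.''
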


\section{Hilbert Function}\label{3}
In this section, we show that, altough the the tangent cone of the 4-generated pseudosymmetric numerical semigroup given in theorem \ref{standardbasis} is non-Cohen-Macaulay, it has a non-decreasing Hilbert function. 
\begin{theorem}\label{Hilbfunc}
Let $P(I_S*)$ denote the numerator of the Hilbert series of the local ring $R_S$. Then

$P(I_S)_*=1-t^{\alpha_4}-t^{\alpha_4}(1-t)(1-t^{\alpha_{21}})-t^{((\alpha_4-1)s_{\alpha_4-1}+\alpha_4-1)\alpha_2+1}(1-t)^2-(1-t)\displaystyle\sum_{j=1}^{\alpha_4-1}t^{j\alpha_2+(\alpha_4-j)}-t^{\alpha_{21}+1}\left[ 1-t^{(\alpha_4-2)\alpha_2+1}-(1-t^{\alpha_2})\sum_{j=0}^{\alpha_4-3} t^{j\alpha_2+(\alpha_4-1-j)}\right] -(1-t)^2(1-t^{\alpha_{21}}) \sum_{j=0}^{\alpha_4-2}t^{((\alpha_4-1)s_{\alpha_j}+j)\alpha_2+\alpha_4-j} -$\newline$ (1-t^{(\alpha_4-1)\alpha_2})(1-t)^2\left[ \displaystyle\sum_{i=1}^{\alpha_4-1}  \displaystyle\sum_{j=s_{(i-1)}}^{s_i-1}t^{j\alpha_1+\left( (\alpha_4-1)j+(i+1)\right)\alpha_{21}+\alpha_3-j } \right] -\displaystyle t^{\alpha_3}\left[ (1-t^{\alpha_{21}+1})(1-t^{(\alpha_4-1)\alpha_2})-\right. $\newline$\displaystyle\left.  (1-t^{\alpha_2})(1-t^{\alpha_{21}})\sum_{j=0}^{\alpha_4-2}t^{j\alpha_2+(\alpha_4-1-j)}\right]$
\end{theorem}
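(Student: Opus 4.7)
The goal is to compute the numerator $P(I_S^*)(t)$ of the first Hilbert series of $A/I_S^*$. The plan is to apply inclusion--exclusion (essentially the Taylor complex) to the minimal monomial generators of $I_S^*$ listed in the Corollary following Theorem \ref{standardbasis}, and then identify and cancel the contributions coming from non-minimal syzygies. Throughout I will use the identity $HS_{A/J}(t)=P_J(t)/(1-t)^4$ for a monomial ideal $J$, so that computing $P(I_S^*)$ reduces to tracking signed $t^{\deg \operatorname{lcm}}$ contributions from the resolution.

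First I would partition the generators of $I_S^*$ into five groups according to the shape of their supports: the pure powers ${f_3}_*=X_3^{\alpha_3}$ and ${f_4}_*=X_4^{\alpha_4}$; the mixed generator ${f_2}_*=X_1^{\alpha_{21}}X_4$; the $X_2^\ast X_3X_4^\ast$ family $\{{f_{1,j}}_*\}_{j=0}^{\alpha_4-1}$; the $X_2^\ast X_4^\ast$ family $\{{g_{j,s_j}}_*\}_{j=0}^{\alpha_4-1}$; and the parameter-dependent $X_1^\ast X_3^\ast$ family $\{{g_{j,i_j}}_*\}_{i_j<s_j}$. Many cross-family pairs have disjoint variable supports (for instance ${f_3}_*$ against ${f_2}_*$, or any $X_1^\ast X_3^\ast$-type against any $X_2^\ast X_4^\ast$-type once the supports separate) so their contributions to the Taylor complex are genuinely Koszul and already minimal.

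Next I would enumerate the pairwise $\operatorname{lcm}$s and match each resulting $t^{\deg \operatorname{lcm}}$ to a term of the claimed formula. The anticipated correspondences are: the factor $(1-t)(1-t^{\alpha_{21}})$ multiplying $t^{\alpha_4}$ comes from the Koszul pair $({f_2}_*,{f_4}_*)$ together with the higher syzygy linking them to ${f_3}_*$; the single sum $\sum_{j=1}^{\alpha_4-1} t^{j\alpha_2+(\alpha_4-j)}$ with its $(1-t)$ factor arises from the chain of overlaps within $\{{f_{1,j}}_*\}$ after extracting the common $X_3$; the bracket $[\,1-t^{(\alpha_4-2)\alpha_2+1}-(1-t^{\alpha_2})\sum_{j=0}^{\alpha_4-3}t^{j\alpha_2+(\alpha_4-1-j)}\,]$ next to $t^{\alpha_{21}+1}$ records overlaps of ${f_2}_*$ with the $f_{1,j}$-family; the sum $\sum_{j=0}^{\alpha_4-2} t^{((\alpha_4-1)s_{\alpha_j}+j)\alpha_2+\alpha_4-j}$ with prefactor $(1-t)^2(1-t^{\alpha_{21}})$ records overlaps of $\{{g_{j,s_j}}_*\}$ with the $f_{1,j}$- and $f_2$-families; and the last bracket against $t^{\alpha_3}$ captures overlaps of ${f_3}_*$ with ${f_2}_*$ and with the $f_{1,j}$-family. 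Each correspondence is verified by reading off the exponents of the relevant lcms.

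The final stage is to show that all remaining higher-order Taylor contributions cancel in pairs or collapse into the telescoping factors $(1-t^{(\alpha_4-1)\alpha_2})$, $(1-t^{\alpha_2})$ and $(1-t^{\alpha_{21}})$ visible in the formula. The main obstacle I expect is the parameter-dependent family $\{{g_{j,i_j}}_*\}_{i_j<s_j}$: both its cardinality and its internal lcm pattern depend on $s_0,\dots,s_{\alpha_4-1}$, so the double sum $\sum_{i=1}^{\alpha_4-1}\sum_{j=s_{i-1}}^{s_i-1} t^{j\alpha_1+((\alpha_4-1)j+(i+1))\alpha_{21}+\alpha_3-j}$ cannot be simplified a priori. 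The key identity to exploit is the same telescoping used in the proof of Theorem \ref{standardbasis}: spolys within this family reduce through powers of $f_{1,\alpha_4-1}$, which at the level of Hilbert series manifests as the common factor $(1-t^{(\alpha_4-1)\alpha_2})(1-t)^2$ in front of the double sum. Once this factor is extracted, the remaining higher syzygies telescope and straightforward bookkeeping yields the stated expression for $P(I_S^*)(t)$.
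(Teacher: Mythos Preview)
Your proposal takes a genuinely different route from the paper. The paper does not use Taylor inclusion--exclusion at all; it applies the recursive Bayer--Stillman algorithm from \cite{bayer}: for a monomial ideal $I=\langle J,w\rangle$ one has $P(I)=P(J)-t^{\deg w}P(J:w)$. The authors simply specify the order in which the generators of $I_S^*$ are peeled off (first the ${g_{j,s_j}}_*$, then the ${g_{k,k_j}}_*$ with $k_j<s_k$, then ${f_4}_*$, ${f_3}_*$, ${f_2}_*$, and finally the ${f_{1,j}}_*$) and leave the resulting recursion implicit. The advantage of this approach is that each colon ideal $(J:w)$ is again a monomial ideal whose generators one can write down directly, so the computation unrolls linearly in the number of generators rather than exponentially in subsets, and the multiplicative factors $(1-t)$, $(1-t^{\alpha_{21}})$, $(1-t^{\alpha_2})$, $(1-t^{(\alpha_4-1)\alpha_2})$ in the statement appear naturally as numerators of the smaller colon ideals rather than as residues of telescoping cancellations.

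Your Taylor-complex plan is correct in principle---the alternating sum over lcms does give $P(I_S^*)$---but the phrase ``straightforward bookkeeping'' at the end hides essentially the whole proof. With on the order of $2\alpha_4+s_{\alpha_4-1}$ generators, the Taylor complex has exponentially many faces; your argument only accounts explicitly for pairs and a few triples, and asserts without justification that all higher faces collapse into the visible prefactors. In particular, for the parameter-dependent family $\{{g_{j,i_j}}_*\}_{i_j<s_j}$ the lcms with ${f_3}_*$, ${f_2}_*$ and among themselves produce many non-Koszul faces whose cancellation pattern you have not described. Reducing this to the closed form would require either building a cellular (e.g.\ Lyubeznik or Scarf-type) resolution supported on a much smaller complex, or else reproducing the Bayer--Stillman recursion in disguise. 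As written, the proposal is a reasonable heuristic for why the formula has the shape it does, but it is not yet a proof; the paper's recursive approach is both shorter and more directly verifiable.
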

\begin{proof}
	We use the following algorithm of Bayer and Stillman
	"If $I$ is a monomial ideal with $I =< J,w >$, then the numerator of the Hilbert series of $A/I$ is $P(I) = P(J)- t ^{\deg w} P(J : w)$ where $w$ is a monomial and $\deg w$ is the total degree of $w$.”
	that appears in \cite{bayer}. Though the order in which we choose the monomials inside $I_S*$  as $w$ does not matter, we picked them  as follows:
	$w_i=g_{\alpha_4-i,s_{\alpha_4-i}}*$ where $i=1,\hdots,\alpha_4$; $w_i=g_{k,k_j}*$ where $k=\alpha_4-1,\alpha_4-2,\hdots,2,1$ and for each $k$, $k_j=s_{k}-1,s_{k}-2,\hdots,s_{k-1}+1, s_{k-1}$ for $i=\alpha_4+1,\hdots,\alpha_4+s_{\alpha_4-1}$; $w_{\alpha_4+s_{\alpha_4-1}+1}=f_4*$; $w_{\alpha_4+s_{\alpha_4-1}+2}=f_3*$; $w_{\alpha_4+s_{\alpha_4-1}+3}=f_2*$; $w_{\alpha_4+s_{\alpha_4-1}+4+j}=f_{1,\alpha_4-j-1}*$ where $j=0,\hdots, \alpha_4-2$
	\end{proof}
\begin{remark}
	Note that for $\alpha_4=2$ in theorem \ref{Hilbfunc}, we obtain  theorem 3.1 of \cite{Sahin} with $s_1=k-1$ and for $\alpha_4=3$, we obtain theorem 4.1 of \cite{Sahin2} with $s_1=s$ and $s_2=l$.
\end{remark}
\begin{theorem}\label{2ndHilb}
	The second Hilbert Series of the local ring is \\

	$Q(t)=\displaystyle\sum_{j=0}^{\alpha_3+\alpha_{21}-1}t^j\sum_{j=0}^{(\alpha_4-1)\alpha_2-\alpha_4}t^{j+\alpha_4-1}+t^{(\alpha_4-1)\alpha_2}\sum_{j=0}^{(\alpha_4-1)s_{\alpha_4-1}\alpha_2}t^j+t^{\alpha_{21}}\sum_{j=0}^{\alpha_3-1}t^j\sum_{j=0}^{\alpha_4-2}t^j-\sum_{j=0}^{(\alpha_4-1)\alpha_2-1}t^j\left[ \displaystyle\sum_{i=1}^{\alpha_4-1}\displaystyle\sum_{j=s_{i-1}}^{s_i-1}\right. $ $\displaystyle\left. t^{j\alpha_1+\left[ (\alpha_4-1)j+(i+1)\right] \alpha_{21}+\alpha_3-j} \right]+\displaystyle\sum_{j=0}^{\alpha_{21}-1}t^j\left[  \sum_{j=0}^{\alpha_3-1}t^j\sum_{j=0}^{\alpha_4-2}t^j-t^{(\alpha_4-2)\alpha_2+2}\sum_{j=0}^{\alpha_2+\alpha_3-3}t^j- \sum_{j=0}^{\alpha_4-2}t^{((\alpha_4-1)s_j+j)\alpha_2+\alpha_4-j}-\right. $ $\displaystyle\left. \sum_{j=0}^{\alpha_3-2}t^j \sum_{j=1}^{\alpha_4-2}t^{j\alpha_2+\alpha_4-j}\right] $
\end{theorem}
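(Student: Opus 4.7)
The plan is to apply the Hilbert--Serre identity. Because $R_S$ has Krull dimension $d=1$ and the presenting polynomial ring has $k=4$ variables, the two forms of the Hilbert series are related by
$$HS_{R_S}(t)=\frac{P(I_S)_*}{(1-t)^4}=\frac{Q(t)}{1-t},$$
so $Q(t)=P(I_S)_*/(1-t)^3$. Hence it suffices to divide the expression for $P(I_S)_*$ furnished by Theorem \ref{Hilbfunc} by $(1-t)^3$ and match the result with the stated formula for $Q(t)$.

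I would carry out the division by first grouping the summands of $P(I_S)_*$ according to the explicit powers of $(1-t)$ they already contain. Summands carrying a factor $(1-t)^2$ need only one more $(1-t)$, which can be extracted from any accompanying $1-t^n$ via $1-t^n=(1-t)(1+t+\cdots+t^{n-1})$; the quotient is a finite geometric sum $\sum_{j=0}^{n-1}t^j$, which is exactly the shape of the building blocks appearing in the claimed $Q(t)$. Summands carrying only a single $(1-t)$, or no explicit factor at all, must be paired with another block so that two further factors of $(1-t)$ can be peeled off; for instance the combination $1-t^{\alpha_4}-t^{\alpha_4}(1-t)(1-t^{\alpha_{21}})$ can be rewritten as $(1-t)\bigl[(1+t+\cdots+t^{\alpha_4-1})-t^{\alpha_4}(1-t^{\alpha_{21}})\bigr]$ and then merged with a piece of the $-t^{\alpha_3}[\cdots]$ bracket to expose the missing $(1-t)^2$.

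The main obstacle is the two bracketed blocks
$$-t^{\alpha_{21}+1}\Bigl[1-t^{(\alpha_4-2)\alpha_2+1}-(1-t^{\alpha_2})\sum_{j=0}^{\alpha_4-3}t^{j\alpha_2+(\alpha_4-1-j)}\Bigr]$$
and the analogous $-t^{\alpha_3}[\cdots]$ contribution, where no $(1-t)$ factor is visible. For these the argument will rest on a telescoping identity of the form
$$1-t^{m\alpha_2+1}=(1-t^{\alpha_2})\sum_{j=0}^{m-1}t^{j\alpha_2}+(1-t)t^{m\alpha_2},$$
together with a careful regrouping of the inner shifted geometric sum so that two factors of $(1-t)$ become manifest after cancellation with $1-t^{(\alpha_4-2)\alpha_2+1}$. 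Once each grouped block has been shown divisible by $(1-t)^3$, the quotients are finite sums of powers of $t$ that can be collected and matched term by term with the stated expression for $Q(t)$, the parameters $s_j$ entering only where they already appear in $P(I_S)_*$, namely in the term with exponent $((\alpha_4-1)s_{\alpha_4-1}+\alpha_4-1)\alpha_2+1$ and in the double sum $\sum_{i=1}^{\alpha_4-1}\sum_{j=s_{i-1}}^{s_i-1}$. As a sanity check I would first specialize to $\alpha_4=2$ and $\alpha_4=3$ and confirm agreement with the formulas obtained in \cite{Sahin,Sahin2} before writing out the general computation.
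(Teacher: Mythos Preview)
Your plan is correct and coincides with the paper's approach: the paper also obtains $Q(t)$ by dividing the numerator $P(I_S)_*$ of Theorem~\ref{Hilbfunc} by $(1-t)^3$, using the same geometric-series identities $1-t^n=(1-t)\sum_{j=0}^{n-1}t^j$. The only organizational difference is that the paper performs the three divisions one at a time, recording explicit intermediate polynomials $P_1(t)$ and $P_2(t)$ with $P(I_S)_*=(1-t)P_1(t)$, $P_1(t)=(1-t)P_2(t)$, $P_2(t)=(1-t)Q(t)$, rather than grouping summands by the power of $(1-t)$ they already carry.
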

\begin{proof}
	Using theorem \ref{Hilbfunc}, observe that 
	
	$I_S*=(1-t)P_1(t)$ where $P_1(t)=(1-t^{\alpha_{21}+1})\left[ \displaystyle\sum_{j=0}^{\alpha_4-2}t^j-t^{\alpha_3}\sum_{j=0}^{(\alpha_4-1)\alpha_2-1}t^j\right]+t^{\alpha_4-1}\left( 1-t^{(\alpha_4-1)(\alpha_2-1)+1}\right)  $
	$-(1-t^{\alpha_{21}})\displaystyle\sum_{j=0}^{\alpha_4-2}t^{j\alpha_2+\alpha_4-j}\left( 1-t^{\alpha_3-1}\sum_{j=0}^{\alpha_2-1}t^j\right)-t^{((\alpha_4-1)s_{\alpha_4-1}+\alpha_4-1)\alpha_2+1}(1-t) -(1-t)(1-t^{\alpha_{21}})\displaystyle\sum_{j=0}^{\alpha_4-2}t^{((\alpha_4-1)s_j+j)\alpha_2+\alpha_4-j}$
	$-\left( 1-t^{(\alpha_4-1)\alpha_2}\right) (1-t)\left[ \displaystyle\sum_{i=1}^{\alpha_4-1}\displaystyle\sum_{j=s_{i-1}}^{s_i-1}t^{j\alpha_1+\left[ (\alpha_4-1)j+(i+1)\right] \alpha_{21}+\alpha_3-j}\right] $.
	
	\noindent$P_1(t)=(1-t)P_2(t)$ where $P_2(t)=(1-t^{\alpha_3+\alpha_{21}})\displaystyle\sum_{j=0}^{(\alpha_4-1)\alpha_2-\alpha_4}t^{j+\alpha_4-1}-(1-t^{\alpha_2+\alpha_3-2})t^{(\alpha_4-2)\alpha_2+2}\sum_{j=0}^{\alpha_{21}-1}t^j$
	$-(1-t^{\alpha_3-1})\displaystyle\sum_{j=0}^{\alpha_{21}-1}t^j\sum_{j=1}^{\alpha_4-2}t^{j\alpha_2+\alpha_4-j}+t^{(\alpha_4-1)\alpha_2}(1-t^{(\alpha_4-1)s_{\alpha_4-1}\alpha_2+1})+(1-t^{\alpha_3})\sum_{j=0}^{\alpha_{21}}t^j\sum_{j=0}^{\alpha_4-2}t^j-(1-t^{\alpha_{21}})\sum_{j=0}^{\alpha_4-2}t^{((\alpha_4-1)s_j+j)\alpha_2+\alpha_4-j}$
	$-\left( 1-t^{(\alpha_4-1)\alpha_2}\right) \left[ \displaystyle\sum_{i=1}^{\alpha_4-1}\displaystyle\sum_{j=s_{i-1}}^{s_i-1}t^{j\alpha_1+\left[ (\alpha_4-1)j+(i+1)\right] \alpha_{21}+\alpha_3-j} \right] $. Here, $P_2(t)=(1-t)Q(t)$ where 
	
	$Q(t)=\displaystyle\sum_{j=0}^{\alpha_3+\alpha_{21}-1}t^j\sum_{j=0}^{(\alpha_4-1)\alpha_2-\alpha_4}t^{j+\alpha_4-1}+t^{(\alpha_4-1)\alpha_2}\sum_{j=0}^{(\alpha_4-1)s_{\alpha_4-1}\alpha_2}t^j+t^{\alpha_{21}}\sum_{j=0}^{\alpha_3-1}t^j\sum_{j=0}^{\alpha_4-2}t^j-\sum_{j=0}^{(\alpha_4-1)\alpha_2-1}t^j\left[ \displaystyle\sum_{i=1}^{\alpha_4-1}\displaystyle\sum_{j=s_{i-1}}^{s_i-1}\right. $ $\displaystyle\left. t^{j\alpha_1+\left[ (\alpha_4-1)j+(i+1)\right] \alpha_{21}+\alpha_3-j} \right]+\displaystyle\sum_{j=0}^{\alpha_{21}-1}t^j\left[  \sum_{j=0}^{\alpha_3-1}t^j\sum_{j=0}^{\alpha_4-2}t^j-t^{(\alpha_4-2)\alpha_2+2}\sum_{j=0}^{\alpha_2+\alpha_3-3}t^j- \sum_{j=0}^{\alpha_4-2}t^{((\alpha_4-1)s_j+j)\alpha_2+\alpha_4-j}-\right. $ $\displaystyle\left. \sum_{j=0}^{\alpha_3-2}t^j \sum_{j=1}^{\alpha_4-2}t^{j\alpha_2+\alpha_4-j}\right] $
	\end{proof}
\noindent To prove the Hilbert function is nondecreasing, we need to show that there are no negative terms in $Q(t)$.

\begin{corollary}\label{simplified}
	$Q(t)$ could be simplified as 
	
	$Q(t)=\displaystyle t^{\alpha_4-1}\sum_{j=0}^{\alpha_3+\alpha_{21}-1}t^j\sum_{j=0}^{\alpha_2-1}t^j+\displaystyle t^{\alpha_4+\alpha_2-1+\alpha_{21}}\sum_{j=0}^{\alpha_2-\alpha_{21}-2}t^j \sum_{j=0}^{\alpha_3-2}t^j\sum_{j=0}^{\alpha_4-3}t^{j(\alpha_2-1)}+t^{(\alpha_4-1)\alpha_2}\sum_{j=0}^{(\alpha_4-1)s_{\alpha_4-1}\alpha_2}t^j+$\newline
	$t^{\alpha_{21}}\displaystyle\sum_{j=0}^{\alpha_3-1}t^j\displaystyle \sum_{j=0}^{\alpha_4-2}t^j-\sum_{j=0}^{(\alpha_4-1)\alpha_2-1}t^j\left[ \displaystyle\sum_{i=1}^{\alpha_4-1}\displaystyle\sum_{j=s_{i-1}}^{s_i-1} t^{j\alpha_1+\left[ (\alpha_4-1)j+(i+1)\right] \alpha_{21}+\alpha_3-j} \right]+\displaystyle\sum_{j=0}^{\alpha_{21}-1}t^j \sum_{j=0}^{\alpha_3-1}t^j\sum_{j=0}^{\alpha_4-2}t^j $
	\newline $ -\displaystyle\sum_{j=0}^{\alpha_{21}-1}t^j\sum_{j=0}^{\alpha_4-2}t^{((\alpha_4-1)s_j+j)\alpha_2+\alpha_4-j}+t^{\alpha_4+\alpha_2+\alpha_3-2}\displaystyle\sum_{j=0}^{\alpha_{21}-1}t^j\sum_{j=0}^{(\alpha_4-3)(\alpha_2-1)-\alpha_3}t^j+  \displaystyle t^{\alpha_4+\alpha_2+\alpha_3+\alpha_{21}-2}\sum_{j=0}^{(\alpha_4-3)\alpha_2-\alpha_3-\alpha_{21}-2}t^j$
\end{corollary}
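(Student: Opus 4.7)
The plan is to derive the simplified form directly from the expression for $Q(t)$ in Theorem \ref{2ndHilb} by purely algebraic manipulation of finite geometric series, with no further use of the standard basis. Write $[a,b] := \sum_{j=a}^{b} t^j$ (shorthand) and note the basic identity $[0,N] = [0,M-1] + t^{M}\,[0,N-M]$ whenever $0 \le M \le N$, which will be used repeatedly to split blocks at specific exponents.

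The first step is to handle the large positive term $\sum_{j=0}^{\alpha_3+\alpha_{21}-1}t^j\sum_{j=0}^{(\alpha_4-1)\alpha_2-\alpha_4}t^{j+\alpha_4-1}$. Factor out $t^{\alpha_4-1}$ and rewrite the inner sum using $(\alpha_4-1)\alpha_2-\alpha_4+1=(\alpha_4-1)(\alpha_2-1)$. Then split the inner sum at $\alpha_2$ to peel off a copy of $\sum_{j=0}^{\alpha_2-1}t^j$, and iterate the splitting $\alpha_4-2$ times with step $\alpha_2-1$ to obtain a telescoping decomposition of the form $t^{\alpha_4-1}\sum_{j=0}^{\alpha_3+\alpha_{21}-1}t^j\sum_{j=0}^{\alpha_2-1}t^j$ plus a remainder that isolates the exponents $j\alpha_2+(\alpha_4-j)$ for $j=1,\dots,\alpha_4-2$. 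Independently, split the outer sum $\sum_{j=0}^{\alpha_3+\alpha_{21}-1}t^j$ as $\sum_{j=0}^{\alpha_{21}-1}t^j + t^{\alpha_{21}}\sum_{j=0}^{\alpha_3-1}t^j$; the first piece will be combined with the later $\sum_{j=0}^{\alpha_{21}-1}t^j[\cdots]$ block, and the second will account for the $t^{\alpha_{21}}\sum_{j=0}^{\alpha_3-1}t^j\sum_{j=0}^{\alpha_4-2}t^j$ contribution.

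Next, inside the bracket $\sum_{j=0}^{\alpha_{21}-1}t^j\left[\sum_{j=0}^{\alpha_3-1}t^j\sum_{j=0}^{\alpha_4-2}t^j - t^{(\alpha_4-2)\alpha_2+2}\sum_{j=0}^{\alpha_2+\alpha_3-3}t^j - \sum_{j=0}^{\alpha_4-2}t^{((\alpha_4-1)s_j+j)\alpha_2+\alpha_4-j} - \sum_{j=0}^{\alpha_3-2}t^j\sum_{j=1}^{\alpha_4-2}t^{j\alpha_2+\alpha_4-j}\right]$, group the three \emph{negative} pieces (the last of which is the residue from the remainder of step two) against the pieces that the earlier splittings have made available. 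The two terms $-t^{(\alpha_4-2)\alpha_2+2}\sum_{j=0}^{\alpha_2+\alpha_3-3}t^j$ and $-\sum_{j=0}^{\alpha_3-2}t^j\sum_{j=1}^{\alpha_4-2}t^{j\alpha_2+\alpha_4-j}$ should both be absorbed against appropriate blocks of $\sum_{j=0}^{(\alpha_4-1)(\alpha_2-1)-1}t^j$ carried by $t^{\alpha_4-1}\sum_{j=0}^{\alpha_{21}-1}t^j$. What survives after cancellation are the two geometric remainders $t^{\alpha_4+\alpha_2-1+\alpha_{21}}\sum_{j=0}^{\alpha_2-\alpha_{21}-2}t^j\sum_{j=0}^{\alpha_3-2}t^j\sum_{j=0}^{\alpha_4-3}t^{j(\alpha_2-1)}$, $t^{\alpha_4+\alpha_2+\alpha_3-2}\sum_{j=0}^{\alpha_{21}-1}t^j\sum_{j=0}^{(\alpha_4-3)(\alpha_2-1)-\alpha_3}t^j$, and $t^{\alpha_4+\alpha_2+\alpha_3+\alpha_{21}-2}\sum_{j=0}^{(\alpha_4-3)\alpha_2-\alpha_3-\alpha_{21}-2}t^j$; the single remaining negative piece $-\sum_{j=0}^{\alpha_{21}-1}t^j\sum_{j=0}^{\alpha_4-2}t^{((\alpha_4-1)s_j+j)\alpha_2+\alpha_4-j}$ is kept as is (together with the big double sum involving $s_{i-1},s_i$), and this matches the stated simplification.

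The main obstacle is purely the bookkeeping of the cancellations: the subtracted blocks in step three live on exponent ranges that overlap with several of the positive blocks produced in step two, and the overlap is governed by the inequalities $\alpha_2 > \alpha_{21}+1$ and $\alpha_4 < \alpha_2+\alpha_3-1$ from the preliminaries, which are precisely what guarantee that the summation indices like $\alpha_2-\alpha_{21}-2$ and $(\alpha_4-3)\alpha_2-\alpha_3-\alpha_{21}-2$ are nonnegative (so the geometric series are nonempty in the generic regime $\alpha_4\ge 3$). The degenerate boundary cases $\alpha_4=2,3$ should then be checked to recover the formulas in \cite{Sahin} and \cite{Sahin2}, consistent with the remark following Theorem \ref{Hilbfunc}.
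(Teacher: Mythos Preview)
Your plan is essentially correct and mirrors the paper's own argument: the appendix proves Corollary~\ref{simplified} by exactly this kind of block-splitting of geometric series, organized into three intermediate computations (first $R_1-R_2$ to absorb the $t^{(\alpha_4-2)\alpha_2+2}$ term, then $S_1-S_2$ to absorb the $\sum t^{j\alpha_2+\alpha_4-j}$ term, and finally a third remark combining the residual pieces into the last two sums of the statement). Your sketch compresses these into a single narrative but follows the same route; the only caution is that your claimed nonnegativity of $(\alpha_4-3)\alpha_2-\alpha_3-\alpha_{21}-2$ does not follow from the stated inequalities alone (it can fail for small $\alpha_4$), so those sums should be read with the empty-sum convention rather than as genuinely nonempty blocks.
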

\begin{proof}
	See the appendix.
\end{proof}

\begin{theorem}\label{nondecreasing}
Let the notation be as in theorem \ref{standardbasis}. Then the local ring has nondecreasing Hilbert function.	
\end{theorem}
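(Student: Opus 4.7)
The proof plan is to deduce non-decreasingness directly from the simplified second Hilbert series of Corollary~\ref{simplified}. Since the Krull dimension of $R_S$ is one, we have $HS_{R_S}(t)=Q(t)/(1-t)$, so $H_{R_S}(n)-H_{R_S}(n-1)=[t^n]\,Q(t)$ for every $n\geq 1$. It therefore suffices to show that every coefficient of $Q(t)$ is nonnegative, and the goal becomes a purely combinatorial one: absorb the two negative blocks appearing in Corollary~\ref{simplified} into the positive blocks of the same expression.

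Explicitly, the first negative block is
\[
-\sum_{k=0}^{(\alpha_4-1)\alpha_2-1} t^k \sum_{i=1}^{\alpha_4-1}\sum_{m=s_{i-1}}^{s_i-1} t^{m\alpha_1+[(\alpha_4-1)m+(i+1)]\alpha_{21}+\alpha_3-m},
\]
coming from the non-boundary standard basis elements $g_{j,i_j}$ with $i_j<s_j$, and the second is
\[
-\sum_{k=0}^{\alpha_{21}-1}t^k\sum_{j=0}^{\alpha_4-2}t^{((\alpha_4-1)s_j+j)\alpha_2+\alpha_4-j},
\]
coming from the boundary elements $g_{j,s_j}$. My plan is to construct, for each negative block, an injective assignment from its monomials (counted with multiplicity) to monomials of the positive blocks of the same degree. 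The large positive block $t^{(\alpha_4-1)\alpha_2}\sum_{j=0}^{(\alpha_4-1)s_{\alpha_4-1}\alpha_2}t^j$ is the natural target for the first negative block, while the small-degree positive blocks built from $t^{\alpha_{21}}\sum_{j=0}^{\alpha_3-1}t^j\sum_{j=0}^{\alpha_4-2}t^j$ together with $\sum_{j=0}^{\alpha_{21}-1}t^j \sum_{j=0}^{\alpha_3-1}t^j\sum_{j=0}^{\alpha_4-2}t^j$ are the natural targets for the second. The comparisons between exponents will rely on the defining inequality of $s_j$ in Theorem~\ref{standardbasis}, on $j\alpha_2+\alpha_4<\alpha_1+j\alpha_{21}+j$ from Lemma~\ref{lmf1j}, and on the structural inequalities $\alpha_1>\alpha_4$, $\alpha_3<\alpha_1-\alpha_{21}$, $\alpha_4<\alpha_2+\alpha_3-1$, $\alpha_2>\alpha_{21}+1$ recalled in Section~\ref{prelim}.

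The main obstacle will be that the $s_j$'s are implicitly defined parameters that can vary considerably, so the injection must be constructed uniformly in them; both the first negative block and the "large" positive block depend explicitly on $s_{\alpha_4-1}$, which is precisely what makes the matching possible. I expect to carry out the matching row by row in $i$: for each $i\in\{1,\dots,\alpha_4-1\}$, the inner sum over $m\in[s_{i-1},s_i-1]$ combined with $k\in[0,(\alpha_4-1)\alpha_2-1]$ forms a union of shifted arithmetic progressions in the exponent, and I would pair this family with a correspondingly shifted consecutive sub-block of the positive target. The verification that these sub-blocks sit inside the positive block and do not overlap as $i$ varies is exactly the bookkeeping that should be relegated to the appendix; once that is done, Theorem~\ref{nondecreasing} follows immediately from the observation that $Q(t)$ has nonnegative coefficients, hence $H_{R_S}(n)\geq H_{R_S}(n-1)$ for all $n$.
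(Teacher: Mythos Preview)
Your overall strategy is correct and matches the paper's: since $HS_{R_S}(t)=Q(t)/(1-t)$, it suffices to show that every coefficient of the simplified $Q(t)$ in Corollary~\ref{simplified} is nonnegative, and this is done by absorbing the two negative blocks into the positive ones. Where you diverge from the paper, however, is in the proposed allocation, and the allocation you sketch does not work.

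The concrete problem is your target for the second negative block
\[
-\sum_{k=0}^{\alpha_{21}-1}t^k\sum_{j=0}^{\alpha_4-2}t^{((\alpha_4-1)s_j+j)\alpha_2+\alpha_4-j}.
\]
You propose to absorb it into the two ``small-degree'' positive blocks $t^{\alpha_{21}}\sum_{j=0}^{\alpha_3-1}t^j\sum_{j=0}^{\alpha_4-2}t^j$ and $\sum_{j=0}^{\alpha_{21}-1}t^j\sum_{j=0}^{\alpha_3-1}t^j\sum_{j=0}^{\alpha_4-2}t^j$. But these positive blocks have maximal exponent roughly $\alpha_{21}+\alpha_3+\alpha_4-3$, while the second negative block contains exponents as large as $((\alpha_4-1)s_{\alpha_4-2}+\alpha_4-2)\alpha_2+\alpha_{21}+1$, which grows with $s_{\alpha_4-2}$ and is typically far larger (already in Example~5.1 one gets degrees around $90$ versus a target of degree at most $13$). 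So those small blocks cannot possibly absorb the $j\ge 1$ summands of the second negative block; only the $j=0$ piece (which is genuinely small since $s_0=0$) would fit. Similarly, mapping the first negative block solely to the single-strip block $t^{(\alpha_4-1)\alpha_2}\sum t^j$ runs into trouble at the low end, since some of its exponents can lie below $(\alpha_4-1)\alpha_2$.

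The paper avoids this by a different allocation: it shows (Step~1) that terms 1, 2, 3 and 6 \emph{together} cover every exponent in $[0,(\alpha_4-1)(s_{\alpha_4-1}+1)\alpha_2]$ with a positive coefficient, so they form one long positive strip rather than several disjoint pockets. It then shows (Steps~2 and~3) that each negative block has all its coefficients equal to $-1$ (no internal collisions), and crucially (Step~4) that the \emph{supports of the two negative blocks are disjoint from one another}. This last point is the step you are missing entirely: with your separate-target scheme you never address what happens when a degree appears in \emph{both} negative blocks, and more importantly you try to route the two blocks to incompatible positive regions. Once Steps~1--4 are in place, Step~5 checks that the maximal negative exponent is dominated by the maximal positive exponent, and the nonnegativity of $Q(t)$ follows.
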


\begin{proof}
Once we do the necessary cancellations, there are no negative terms in $Q(t)$.


	We will prove this in a couple of steps. Our aim is to show that the two negative sums (5th and 7th terms) in proposition \ref{simplified} will be cancelled out by the other terms in $Q(t)$. 
	\begin{enumerate}
		\item[STEP 1] Term 1,2,3 and 6 contains all of $t^j$ where $0\leq j \leq (\alpha_4-1)(s_{\alpha_4-1}+1)\alpha_2$ with a positive sign.
		\item[STEP 2] 5th term contains only SOME of  $t^{j}$ where $\alpha_{21}+\alpha_3 \leq j \leq (s_{\alpha_4-1}-1)\alpha_1+\left[ (\alpha_4-1)(s_{\alpha_4-1}-1)+\alpha_4\right]\alpha_{21}+\alpha_3-s_{\alpha_4-1}+(\alpha_4-1)\alpha_2 $ (where the coefficients are not grater than 1)
		
		\noindent To see the coefficients are not greater than 1, it is enough observe that the difference in between the degrees of the two consecutive  terms in $ \displaystyle\sum_{i=1}^{\alpha_4-1}\displaystyle\sum_{j=s_{i-1}}^{s_i-1} t^{j\alpha_1+\left[ (\alpha_4-1)j+(i+1)\right] \alpha_{21}+\alpha_3-j}$ are either $\alpha_1+(\alpha_4-1)\alpha_{21}-1$(for a fixed i) or $\alpha_1+\alpha_4\alpha_{21}-1$(when i changes) and both of these degrees are greater than $(\alpha_4-1)\alpha_2-1$. To see this, observe that  lemma \ref{lmf1j} gives $(\alpha_4-1)\alpha_2<\alpha_1+(\alpha_4-1)\alpha_{21}$, by taking $j=\alpha_4-1$.

		\item[STEP 3] 7th term contains only SOME of  $t^{j}$ where $\alpha_4 \leq j \leq \left[ (\alpha_4-1)(s_{\alpha_4-2}+1)-1\right]\alpha_{2}+\alpha_{21}+1$ (where the coefficients are not grater than 1)
		
		\noindent To see the coefficients are not greater than 1, it is enough observe that the difference in between the degrees of the two consecutive  terms in $\displaystyle\sum_{j=0}^{\alpha_4-2}t^{((\alpha_4-1)s_j+j)\alpha_2+\alpha_4-j}$ are greater than $\alpha_{21}-1$ but this is clear as $\alpha_2>\alpha_{21}+1$.

		\item[STEP 4] None of the $t^j$'s in the 5th term are in the 7th term so their sum is only the sum of $t^j$'s where $j\leq (s_{\alpha_4-1}-1)\alpha_1+\left[ (\alpha_4-1)(s_{\alpha_4-1}-1)+\alpha_4\right]\alpha_{21}+\alpha_3-s_{\alpha_4-1}+(\alpha_4-1)\alpha_2$ .\\

		\noindent To see none of the $t^j$'s in the 5th term are in the  7th term, observe first \\

	\noindent	$\displaystyle\sum_{j=0}^{(\alpha_4-1)\alpha_2-1}t^j\left[ \displaystyle\sum_{i=1}^{\alpha_4-1}\displaystyle\sum_{j=s_{i-1}}^{s_i-1} t^{j\alpha_1+\left[ (\alpha_4-1)j+(i+1)\right] \alpha_{21}+\alpha_3-j} \right]=$\newline$\displaystyle\sum_{i=1}^{\alpha_4-1}\sum_{j=(\alpha_1+(\alpha_4-1)\alpha_{21}-1)s_{i-1}}^{(\alpha_1+(\alpha_4-1)\alpha_{21}-1)s_i-1}t^{j+(i+1)\alpha_{21}+\alpha_3}-\displaystyle\sum_{j=(\alpha_4-1)\alpha_2}^{\alpha_1-(\alpha_4-1)\alpha_{21}-2}t^j \left[ \displaystyle\sum_{i=1}^{\alpha_4-1}\displaystyle\sum_{j=s_{i-1}}^{s_i-1} t^{j\alpha_1+\left[ (\alpha_4-1)j+(i+1)\right] \alpha_{21}+\alpha_3-j} \right].$

\noindent	Hence it is enough to show that none of the terms in 
		$Y(t)=\displaystyle\sum_{j=0}^{\alpha_{21}-1}t^j\sum_{i=0}^{\alpha_4-2}t^{((\alpha_4-1)s_i+i)\alpha_2+\alpha_4-i}$ are in  \newline $Z(t)=\displaystyle\sum_{i=1}^{\alpha_4-1}\sum_{j=(\alpha_1+(\alpha_4-1)\alpha_{21}-1)s_{i-1}}^{(\alpha_1+(\alpha_4-1)\alpha_{21}-1)s_i-1}t^{j+(i+1)\alpha_{21}+\alpha_3}$.\\

\noindent Observe that for a fixed $i$, by the definition of $s_i$,
		\begin{eqnarray*}
			((\alpha_4-1)s_i+i)\alpha_2+\alpha_4-i+\alpha_{21}-1 & < & s_i\alpha_1+((\alpha_4-1)s_i+i+2)\alpha_{21}+\alpha_3-s_i\\
			(\alpha_4-1)\alpha_2s_i+i(\alpha_2-1)+\alpha_4+\alpha_{21}-1&<&(\alpha_1+(\alpha_4-1)\alpha_{21}-1)s_i+(i+1)\alpha_{21}+\alpha_{21}+\alpha_3
		\end{eqnarray*}
\noindent	which shows that the degree of the last term of $Y(t)$  for $i$, is less than the degree of the first term of  $Z(t)$   for $i+1$.\\
	
\noindent	On the other hand, by the definition $s_i$,
	\begin{eqnarray*}
		s_{i-1}(\alpha_1-1+(\alpha_4-1)(-\alpha_2+\alpha_{21}))+\alpha_{21}+\alpha_3-\alpha_4&\leq& (\alpha_2-\alpha_{21}-1)i\\
		s_{i-1}(\alpha_1-1+(\alpha_4-1)(-\alpha_2+\alpha_{21}))+\alpha_{21}+\alpha_3+(\alpha_4-1)\alpha_2s_i+\alpha_{21}i&\leq& (\alpha_2-1)i+\alpha_4+(\alpha_4-1)\alpha_2s_i	
	\end{eqnarray*}
Since $s_{i-1}\leq s_i$ for any $i$, this implies,
$$(\alpha_1+(\alpha_4-1)\alpha_{21}-1)s_{i-1}-1+(i+1)\alpha_{21}+\alpha_3<(\alpha_4-1)\alpha_2s_i+i(\alpha_2-1)+\alpha_{21}$$
which shows that the degree last term of $Z(t)$ for $i-1$ is less than the degree of the first term of $Y(t)$ for $i$.

		\item[STEP 5] Maximum power appearing in the sum of 1st, 2nd, 3rd and 6th terms is greater than the maximum power apperaring in the sum of 5th and 7th terms.
		
		\noindent Indeed, by the definition of $s_{\alpha_4-1}$, $s_{\alpha_4-1}$ satisfies,
		\begin{eqnarray*}
			(\alpha_4-1)s_{\alpha_{4-1}}\alpha_2+1&\geq&(s_{\alpha_4-1}-1)(\alpha_1-1)+((\alpha_4-1)(s_{\alpha_4-1}-1)+\alpha_4)\alpha_{21}+\alpha_3
		\end{eqnarray*}
	Then adding $(\alpha_4-1)\alpha_2-1$ to both sides, we obtain the result.
		
	\end{enumerate}
Hence, when we add 1st,2nd,6th, 7th 9th and 10th terms, all of the negative ones will disappear and there won't be any negative terms and the Hilbert Function will be non-decreasing.
\end{proof}


\section{Examples}\label{4}

The following examples are verified using the computer algebra system SINGULAR, see  \cite{singular}.
\begin{example}
	Let $\alpha_{21}=5$,  $\alpha_{1}=21$,  $\alpha_{2}=11$,  $\alpha_{3}=7$,  $\alpha_{4}=4$. One can easily check that  $n_1=232<n_2=237<n_3=531<n_4=1447$  and hence the conditions $(1)$, $(2)$, $(3)$  are automatically satisfied and also  $(4)$ is satisfied. Which implies that our theorem is applicable. Using the definition of $s_j$, $s_0=0$, $s_1=0$, $s_2=2$ and $s_3=4$. According to the theorem \ref{standardbasis}, a standard basis for the defining ideal is $\{f_{1,0},f_{1,1},f_{1,2},f_{1,3},f_2,f_3,f_4,g_{0,0}, g_{1,0},g_{2,0},g_{2,1},g_{2,2}, g_{3,2},g_{3,3},g_{3,4}\}$. Indeed, the standard basis that SINGULAR gives is 
	
\noindent	$I_S=\{ g_{0,0}=X_2X_4^3-X_1^6X_3^6,
	f_{1,0}=X_3X_4^3-X_1^{21},
	f_4=X_4^4-X_1X_2^{10}X_3^6,
	f_2=X_1^5X_4-X_2^{11},
	f_3=X_3^7-X1^{15}X_2,
	g_{1,0}=X_2^{12}X_4^2-X_1^{11}X_3^6,
	f_{1,1}=X_2^{11}X_3X_4^2-X_1^{26},
	g_{2,0}=X_1^{16}X_3^6-X_2^{23}X_4,
	f_{1,2}=X_2^{22}X_3X_4-X_1^{31},
	f_{1,3}=X_2^{33}X_3-X_1^{36},
	g_{2,1}=X_1^{52}X_3^5-X_2^{56}X_4,
	g_{2,2}=X_2^{89}X_4-X_1^{88}X_3^4,
	g_{3,2}=X_1^{93}X_3^4-X_2^{100},
	g_{3,3}=X_1^{129}X_3^3-X_2^{133},
	g_{3,4}=X_2^{166}-X_1^{165}X_3^2\}$
the  numerator of the Hilbert series of the local ring is
$P(I_S*)=  1 -3 t^4+3 t^5-2 t^6- t^7+ 3 t^9 -2 t^{10} + t^{11}+ t^{13}-2 t^{14}+2 t^{15}- t^{16}+2 t^{19}-2 t^{20}- t^{22}+2 t^{23}-2 t^{24}+ t^{26}+t^{29}- t^{31}- t^{34}+ t^{36}+ t^{40}- t^{41}+t^{55}-2 t^{56}+2 t^{58}-t^{59}+ t^{95}-2 t^{96}+2 t^{98}- t^{99}+ t^{130}-2 t^{131}+2 t^{133}- t^{134}+t^{165}-3 t^{166}+3 t^{167} -t^{168}$ and the second Hilbert series is 
$Q(t)= 1 +3 t^1+  6 t^2+ 10 t^3+ 12 t^4+15 t^5+   17 t^6 +17 t^7+15 t^8+14 t^9+12 t^{10}+10 t^{11}+8 t^{12}+7 t^{13}+5 t^{14}+4 t^{15}+3 t^{16}+2 t^{17}+ t^{18}+2 t^{19}+3 t^{20}+4 t^{21}+4 t^{22}+5 t^{23}+5 t^{24}+4 t^{25}+3 t^{26}+2 t^{27}+ t^{28}+ t^{29}+2 t^{30}+3 t^{31}+4 t^{32}+5 t^{33}+5 t^{34}+4 t^{35}+3 t^{36}+2 t^{37}+ t^{38}+ t^{55}+ t^{56}+ t^{95}+ t^{96}+ t^{130}+ t^{131} + t^{165}$.
\end{example}
\begin{example}
	Let $\alpha_{21}=10$,  $\alpha_{1}=60$,  $\alpha_{2}=20$,  $\alpha_{3}=8$,  $\alpha_{4}=6$. In this case, $n_1=801<n_2=831<n_3=5010<n_4=8610$    
	and our theorem is applicable. Using the definition of $s_j$, $s_0=0$, $s_1=0$, $s_2=1$, $s_3=1$, $s_4=3$ and $s_5=4$. According to the theorem \ref{standardbasis}, a standard basis for the defining ideal $I_S$ is $\left( f_{1,0},f_{1,1},f_{1,2},f_{1,3}, f_{1,4}, f_{1,5}, f_2, f_3, f_4, g_{0,0},  g_{1,0}, g_{2,0},  g_{2,1}, g_{3,1},\right. $ $ \left.  g_{3,2}, g_{4,2}, g_{4,3}, g_{5,3}, g_{5,4}\right) $. Indeed, the standard basis that SINGULAR gives is 
	
	\noindent	$I_S=\{g_{0,0}=X_2X_4^5-X_1^{11}X_3^7
,f_{1,0}=X_3X_4^5-X_1^{60}
,f_4=X_4^6-X_1X_2^{19}X_3^7
,f_3=X_3^8-X_1^{49}X_2
,f_2=X_1^{10}X_4-X_2^{20}
,g_{1,0}=X_2^{21}X_4^4-X_1^{21}X_3^7
,f_{1,1}=X_2^{20}X_3X_4^4-X_1^{70}
,g_{2,0}=X_1^{31}X_3^7-X_2^{41}X_4^3
,f_{1,2}=X_2^{40}X_3X_4^3-X_1^{80}
,f_{1,3}=X_2^{60}X_3X_4^2-X_1^{90}
,f_{1,4}=X_2^{80}X_3X_4-X_1^{100}
,f_{1,5}=X_2^{100}X_3-X_1^{110}
,g_{2,1}=X_2^{141}X_4^3-X_1^{141}X_3^6
,g_{3,1}=X_1^{151}X_3^6-X_2^{161}X_4^2
,g_{3,2}=X_2^{261}X_4^2-X_1^{261}X_3^5
,g_{4,2}=X_1^{271}X_3^5-X_2^{281}X_4
,g_{4,3}=X_2^{381}X_4-X_1^{381}X_3^4
,g_{5,3}=X_1^{391}X_3^4-X_2^{401}
,g_{5,4}=X_2^{501}-X_1^{501}X_3^3\}$. The  numerator of the Hilbert series of the local ring is
$P(I_S*)=     1 -3 t^{6}+3 t^7-2 t^8-t^{11}+t^{13}+3 t^{16}-3 t^{17}+t^{18}+t^{19}-t^{23}-2 t^{25}+3 t^{26}- t^{27}+t^{32}-t^{33}+2 t^{35}-3 t^{36}+t^{37}-t^{38}+2 t^{39} -t^{40}-t^{42}+t^{43}-t^{44}+t^{45}+t^{51}-t^{52}+t^{54}-t^{55}-t^{61}+t^{62}-t^{63}+t^{64}+t^{70}-t^{71}+t^{73}-t^{74}-t^{80}+t^{81}-t^{82}+t^{83}+t^{89}-t^{90}+t^{92}-t^{93}-t^{99}+t^{100}-t^{101}+t^{102}+t^{108}-t^{109}+t^{138}-2 t^{139}+t^{140}-t^{144}+2 t^{145}- t^{146}+t^{154}-2 t^{155}+t^{156}-t^{157}+2 t^{158}-t^{159}+t^{257}-2 t^{258}+t^{259}-t^{263}+2 t^{264}-t^{265}+t^{273}-2 t^{274}+t^{275}-t^{276}+2 t^{277}-t^{278}+t^{376}-2 t^{377}+t^{378}-t^{382}+2 t^{383}-t^{384}+t^{392}-2 t^{393}+t^{394}-t^{395}+2 t^{396}-t^{397}+t^{495}-2 t^{496}+t^{497}-t^{501}+2 t^{502}-t^{503}$
and the second Hilbert series is 
$Q(t)= 1 + 3 t  + 6 t^2+10 t^3   +15 t^4     +21 t^5+25 t^6 +30 t^7      +34 t^8       +37 t^9       +39 t^{10}    +    39 t^{11}        +37 t^{12}     +  34 t^{13}     +  30 t^{14}       +25 t^{15}       + 22 t^{16}       +18 t^{17}        +14 t^{18}        +11 t^{19}        +9 t^{20}   +8 t^{21} +8 t^{22}+8 t^{23}+8 t^{24}+6 t^{25}+5 t^{26}+4 t^{27}+3 t^{28}+2 t^{29}+t^{30}+2 t^{35}+3 t^{36}+4 t^{37}+4 t^{38}+5 t^{39}+6 t^{40}+7 t^{41}+7 t^{42}+7 t^{43}+6 t^{44}+5 t^{45}+4 t^{46}+3 t^{47}+2 t^{48}+t^{49}+t^{54}+2 t^{55}+3 t^{56}+4 t^{57}+5 t^{58}+6 t^{59}+7 t^{60}+7 t^{61}+7 t^{62}+6 t^{63}+5 t^{64}+4 t^{65}+3 t^{66}+2 t^{67}+t^{68}+ t^{73}+2 t^{74}+3 t^{75}+4 t^{76}+5 t^{77}+6 t^{78}+7 t^{79}+7 t^{80}+7 t^{81}+6 t^{82}+5 t^{83}+4 t^{84}+3 t^{85}+2 t^{86}+t^{87}+ t^{92}+2 t^{93}+3 t^{94}+4 t^{95}+5 t^{96}+6 t^{97}+7 t^{98}+7 t^{99}+7 t^{100}+6 t^{101}+5 t^{102}+4 t^{103}+3 t^{104}+2 t^{105}+t^{106}
 +t^{138}+t^{139}+t^{140}+t^{141}+t^{142}+t^{143}+t^{154}+t^{155}+t^{156}+t^{257}+t^{258}+t^{259}+t^{260}+t^{261}+ t^{262}+t^{273}+t^{274}+t^{275}+t^{376}+t^{377}+t^{378}+t^{379}+t^{380}+t^{381}+t^{392}+t^{393}+t^{394}+t^{495}+t^{496}+t^{497}+t^{498}+t^{499}+t^{500}$

\end{example}
\section{Conclusion}
If $n_1<n_2<n_3<n_4$ and $<n_1,n_2,n_3,n_4>$ is a 4 generated pseudo-symmetric numerical semigroup, then the  Hilbert function of the local ring is always nondecreasing. This supports Rossi's conjecture, saying that " The Hilbert function of a one dimensional Cohen-Macaulay local ring with small enough embedding dimension is nondecrasing."
\section{Apppendix}
In this appendix we show the technical details to prove the Corollary \ref{simplified}. 
\begin{remark}\label{simp1}
	Let $R_1(t)=\displaystyle\sum_{j=0}^{\alpha_3+\alpha_{21}-1}t^j\sum_{j=0}^{(\alpha_4-1)\alpha_2-\alpha_4}t^{j+\alpha_4-1}$ and $R_2(t)=\displaystyle t^{(\alpha_4-2)\alpha_2+2}\sum_{j=0}^{\alpha_{21}-1}t^j\sum_{j=0}^{\alpha_2+\alpha_3-3}t^j$. Then $R_1(t)-R_2(t)=t^{\alpha_4-1}\displaystyle\sum_{j=0}^{\alpha_3+\alpha_{21}-1}t^j\sum_{j=0}^{(\alpha_4-2)(\alpha_2-1)}t^j+t^{(\alpha_4-2)\alpha_2+\alpha_{21}+2}\sum_{j=0}^{\alpha_3-1}t^j\sum_{j=0}^{\alpha_2-\alpha_{21}-3}t^j$
\end{remark}
\begin{proof}
	
	Observe that \begin{eqnarray*}
		R_1(t)&=&\left[ \displaystyle\sum_{j=0}^{\alpha_3+\alpha_{21}-1}t^j\sum_{j=0}^{(\alpha_4-2)(\alpha_2-1)}t^{j} + \displaystyle\sum_{j=0}^{\alpha_3+\alpha_{21}-1}t^j\sum_{j=(\alpha_4-2)(\alpha_2-1)+1}^{(\alpha_4-1)\alpha_2-\alpha_4}t^{j} \right] t^{\alpha_4-1}\\
		&=&\left[\displaystyle\sum_{j=0}^{\alpha_3+\alpha_{21}-1}t^j\sum_{j=0}^{(\alpha_4-2)(\alpha_2-1)}t^{j} \right] t^{\alpha_4-1}+t^{\alpha_4-1+(\alpha_4-2)(\alpha_2-1)+1} \left[  \displaystyle\sum_{j=0}^{\alpha_3+\alpha_{21}-1}t^j\sum_{j=0}^{\alpha_2-3}t^{j}\right] \\
		&=& \left[\displaystyle\sum_{j=0}^{\alpha_3+\alpha_{21}-1}t^j\sum_{j=0}^{(\alpha_4-2)(\alpha_2-1)}t^{j} \right] t^{\alpha_4-1}+t^{(\alpha_4-2)\alpha_2+2} \left[  \displaystyle\sum_{j=0}^{\alpha_3+\alpha_{21}-1}t^j\sum_{j=0}^{\alpha_2-3}t^{j}\right].
	\end{eqnarray*}	Let $S_1(t)=  \left[\displaystyle\sum_{j=0}^{\alpha_3+\alpha_{21}-1}t^j\sum_{j=0}^{(\alpha_4-2)(\alpha_2-1)}t^{j} \right] t^{\alpha_4-1}$ Then 
	
	\begin{eqnarray*}
		R_1(t)-R_2(t)&=&S_1(t)+t^{(\alpha_4-2)\alpha_2+2} \left[  \displaystyle\sum_{j=0}^{\alpha_3+\alpha_{21}-1}t^j\sum_{j=0}^{\alpha_2-3}t^{j}-\sum_{j=0}^{\alpha_{21}-1}t^j\sum_{j=0}^{\alpha_2+\alpha_3-3}t^j\right]\\
		&=&S_1(t)+t^{(\alpha_4-2)\alpha_2+2} \left[  \displaystyle\sum_{j=0}^{\alpha_{21}-1}t^j\sum_{j=0}^{\alpha_2-3}t^{j}+\sum_{j=\alpha_{21}}^{\alpha_3+\alpha_{21}-1}t^j\sum_{j=0}^{\alpha_2-3}t^j-\sum_{j=0}^{\alpha_{21}-1}t^j\sum_{j=0}^{\alpha_2-3}t^j-\sum_{j=0}^{\alpha_{21}-1}t^j\sum_{j=\alpha_2-2}^{\alpha_2+\alpha_3-3}t^j\right]\\
		&=&S_1(t)+t^{(\alpha_4-2)\alpha_2+2} \left[  \displaystyle\sum_{j=\alpha_{21}}^{\alpha_3+\alpha_{21}-1}t^j\sum_{j=0}^{\alpha_2-3}t^j-\sum_{j=0}^{\alpha_{21}-1}t^j\sum_{j=\alpha_2-2}^{\alpha_2+\alpha_3-3}t^j\right]\\
		&=&S_1(t)+t^{(\alpha_4-2)\alpha_2+2} \left[ t^{\alpha_{21}} \displaystyle\sum_{j=0}^{\alpha_3-1}t^j\sum_{j=0}^{\alpha_2-3}t^j-t^{\alpha_2-2}\sum_{j=0}^{\alpha_{21}-1}t^j\sum_{j=0}^{\alpha_3-1}t^j\right]\\
		&=&S_1(t)+t^{(\alpha_4-2)\alpha_2+\alpha_{21}+2}\displaystyle\sum_{j=0}^{\alpha_3-1} t^j\left[ \sum_{j=0}^{\alpha_2-3}t^j-t^{\alpha_2-\alpha_{21}-2}\sum_{j=0}^{\alpha_{21}-1}t^j\right]\\
		&=&S_1(t)+t^{(\alpha_4-2)\alpha_2+\alpha_{21}+2}\displaystyle\sum_{j=0}^{\alpha_3-1} t^j\sum_{j=0}^{\alpha_2-\alpha_{21}-3}t^j
	\end{eqnarray*}
	Hence,
	\begin{equation}\label{R(t)}
		R_1(t)-R_2(t)=\left[\displaystyle\sum_{j=0}^{\alpha_3+\alpha_{21}-1}t^j\sum_{j=0}^{(\alpha_4-2)(\alpha_2-1)}t^{j} \right] t^{\alpha_4-1}+t^{(\alpha_4-2)\alpha_2+\alpha_{21}+2}\displaystyle\sum_{j=0}^{\alpha_3-1} t^j\sum_{j=0}^{\alpha_2-\alpha_{21}-3}t^j
	\end{equation}

\end{proof}
\begin{corollary}
	Using equation \ref{R(t)}, we can rewrite $Q(t)$ as,

	$Q(t)= t^{\alpha_4-1}\displaystyle\sum_{j=0}^{\alpha_3+\alpha_{21}-1}t^j\sum_{j=0}^{(\alpha_4-2)(\alpha_2-1)}t^j+t^{(\alpha_4-2)\alpha_2+\alpha_{21}+2}\sum_{j=0}^{\alpha_3-1}t^j\sum_{j=0}^{\alpha_2-\alpha_{21}-3}t^j+t^{(\alpha_4-1)\alpha_2}\sum_{j=0}^{(\alpha_4-1)s_{\alpha_4-1}\alpha_2}t^j+$ \newline
	$t^{\alpha_{21}}\displaystyle\sum_{j=0}^{\alpha_3-1}t^j\displaystyle \sum_{j=0}^{\alpha_4-2}t^j-\sum_{j=0}^{(\alpha_4-1)\alpha_2-1}t^j\left[ \displaystyle\sum_{i=1}^{\alpha_4-1}\displaystyle\sum_{j=s_{i-1}}^{s_i-1} t^{j\alpha_1+\left[ (\alpha_4-1)j+(i+1)\right] \alpha_{21}+\alpha_3-j} \right]$\newline$+\displaystyle\sum_{j=0}^{\alpha_{21}-1}t^j\left[  \sum_{j=0}^{\alpha_3-1}t^j\sum_{j=0}^{\alpha_4-2}t^j-\right. $ $\displaystyle\left. \sum_{j=0}^{\alpha_3-2}t^j \sum_{j=1}^{\alpha_4-2}t^{j\alpha_2+\alpha_4-j}\right] -\displaystyle\sum_{j=0}^{\alpha_{21}-1}t^j\sum_{j=0}^{\alpha_4-2}t^{((\alpha_4-1)s_j+j)\alpha_2+\alpha_4-j}$
\end{corollary}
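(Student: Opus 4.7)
The plan is a direct substitution using Remark \ref{simp1}. Inspecting the expression for $Q(t)$ given in Theorem \ref{2ndHilb}, the first summand is exactly $R_1(t)$ as defined in the remark, while inside the bracket multiplied by $\sum_{j=0}^{\alpha_{21}-1}t^j$ appears the sub-summand $-t^{(\alpha_4-2)\alpha_2+2}\sum_{j=0}^{\alpha_2+\alpha_3-3}t^j$. Distributing $\sum_{j=0}^{\alpha_{21}-1}t^j$ across that bracket, the contribution of this sub-summand to $Q(t)$ is precisely $-R_2(t)$ in the notation of Remark \ref{simp1}.

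Accordingly, the first step is to re-group $Q(t)$ from Theorem \ref{2ndHilb} as
\[
Q(t)=\bigl(R_1(t)-R_2(t)\bigr)+T(t),
\]
where $T(t)$ collects the five remaining summands, namely $t^{(\alpha_4-1)\alpha_2}\sum_{j=0}^{(\alpha_4-1)s_{\alpha_4-1}\alpha_2}t^j$, the mixed product $t^{\alpha_{21}}\sum_{j=0}^{\alpha_3-1}t^j\sum_{j=0}^{\alpha_4-2}t^j$, the negative double sum $-\sum_{j=0}^{(\alpha_4-1)\alpha_2-1}t^j\bigl[\sum_{i=1}^{\alpha_4-1}\sum_{j=s_{i-1}}^{s_i-1}t^{j\alpha_1+[(\alpha_4-1)j+(i+1)]\alpha_{21}+\alpha_3-j}\bigr]$, the reduced bracket $\sum_{j=0}^{\alpha_{21}-1}t^j\bigl[\sum_{j=0}^{\alpha_3-1}t^j\sum_{j=0}^{\alpha_4-2}t^j-\sum_{j=0}^{\alpha_3-2}t^j\sum_{j=1}^{\alpha_4-2}t^{j\alpha_2+\alpha_4-j}\bigr]$ (what survives after removing the $-R_2(t)$ piece from the bracket), and finally $-\sum_{j=0}^{\alpha_{21}-1}t^j\sum_{j=0}^{\alpha_4-2}t^{((\alpha_4-1)s_j+j)\alpha_2+\alpha_4-j}$.

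Next I would invoke equation \eqref{R(t)} from Remark \ref{simp1} to replace $R_1(t)-R_2(t)$ by
\[
t^{\alpha_4-1}\sum_{j=0}^{\alpha_3+\alpha_{21}-1}t^j\sum_{j=0}^{(\alpha_4-2)(\alpha_2-1)}t^j+t^{(\alpha_4-2)\alpha_2+\alpha_{21}+2}\sum_{j=0}^{\alpha_3-1}t^j\sum_{j=0}^{\alpha_2-\alpha_{21}-3}t^j,
\]
and then reinsert $T(t)$ verbatim. The two displayed terms occupy the first two slots of the claimed rewriting, and the five summands of $T(t)$ appear in the order listed in the corollary statement, producing the formula exactly as written.

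The argument is essentially bookkeeping: all nontrivial algebra has already been absorbed into Remark \ref{simp1}. The only point demanding any attention is the correct identification of $-R_2(t)$ inside the $\sum_{j=0}^{\alpha_{21}-1}t^j$-bracket of $Q(t)$ in Theorem \ref{2ndHilb}; once this identification is made and the corresponding subterm is removed from the bracket, the substitution from equation \eqref{R(t)} is mechanical and immediately yields the claim.
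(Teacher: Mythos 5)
Your proposal is correct and is exactly the (implicit) argument the paper intends: the corollary is stated without a separate proof precisely because it amounts to identifying the first summand of $Q(t)$ in Theorem \ref{2ndHilb} with $R_1(t)$, extracting $-R_2(t)$ from the $\sum_{j=0}^{\alpha_{21}-1}t^j$-bracket, and substituting equation \eqref{R(t)} from Remark \ref{simp1}. Nothing further is needed.
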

\begin{remark}
	Let $S_1(t)=t^{\alpha_4-1}\displaystyle\sum_{j=0}^{\alpha_3+\alpha_{21}-1}t^j\sum_{j=0}^{(\alpha_4-2)(\alpha_2-1)}t^j$  as in the the proof of remark \ref{simp1} and let
	
	$S_2(t)=\displaystyle\sum_{j=0}^{\alpha_{21}-1}t^j   \sum_{j=0}^{\alpha_3-2}t^j \sum_{j=1}^{\alpha_4-2}t^{j\alpha_2+\alpha_4-j}$. Then
	
	\begin{equation*}
		\begin{aligned}
			S_1(t)-S_2(t)=&\displaystyle t^{\alpha_4-1}\sum_{j=0}^{\alpha_3+\alpha_{21}-1}t^j\sum_{j=0}^{\alpha_2-1}t^j+t^{\alpha_4+\alpha_2+\alpha_3-2}\sum_{j=0}^{\alpha_{21}}t^j\sum_{j=0}^{(\alpha_4-3)(\alpha_2-1)-1}t^j\\& +\displaystyle t^{\alpha_4+\alpha_2-1+\alpha_{21}}\sum_{j=0}^{\alpha_2-\alpha_{21}-2}t^j \sum_{j=0}^{\alpha_3-2}t^j\sum_{j=0}^{\alpha_4-3}t^{j(\alpha_2-1)}-t^{(\alpha_4-2)\alpha_2+2} \displaystyle\sum_{j=0}^{\alpha_3-2}t^j \sum_{j=0}^{\alpha_2-2}t^j	
		\end{aligned}
	\end{equation*}

\end{remark}
\begin{proof}
	Observe that 
	\begin{eqnarray*}
		S_1(t)&=&\displaystyle t^{\alpha_4-1}\sum_{j=0}^{\alpha_3+\alpha_{21}-1}t^j\sum_{j=0}^{\alpha_2-1}t^j+t^{\alpha_4+\alpha_2-1}\sum_{j=0}^{\alpha_3+\alpha_{21}-1}t^j\sum_{j=0}^{(\alpha_4-3)(\alpha_2-1)-1}t^j \\
		&=&\displaystyle t^{\alpha_4-1}\sum_{j=0}^{\alpha_3+\alpha_{21}-1}t^j\sum_{j=0}^{\alpha_2-1}t^j+t^{\alpha_4+\alpha_2-1}\sum_{j=0}^{\alpha_3-2}t^j\sum_{j=0}^{(\alpha_4-3)(\alpha_2-1)-1}t^j+t^{\alpha_4+\alpha_2+\alpha_3-2}\sum_{j=0}^{\alpha_{21}}t^j\sum_{j=0}^{(\alpha_4-3)(\alpha_2-1)-1}t^j\\
		&=&S_3(t)+t^{\alpha_4+\alpha_2-1}\sum_{j=0}^{\alpha_3-2}t^j\sum_{j=0}^{(\alpha_4-3)(\alpha_2-1)-1}t^j
	\end{eqnarray*}
	where $S_3(t)=\displaystyle t^{\alpha_4-1}\sum_{j=0}^{\alpha_3+\alpha_{21}-1}t^j\sum_{j=0}^{\alpha_2-1}t^j+t^{\alpha_4+\alpha_2+\alpha_3-2}\sum_{j=0}^{\alpha_{21}}t^j\sum_{j=0}^{(\alpha_4-3)(\alpha_2-1)-1}t^j$. On the other hand,
	
	\begin{eqnarray*}
		S_2(t)&=&\displaystyle\sum_{j=0}^{\alpha_{21}-1}t^j   \sum_{j=0}^{\alpha_3-2}t^j \sum_{j=1}^{\alpha_4-2}t^{j\alpha_2+\alpha_4-j}=t^{\alpha_4+\alpha_2-1}\displaystyle\sum_{j=0}^{\alpha_{21}-1}t^j\sum_{j=0}^{\alpha_3-2}t^j\sum_{j=0}^{\alpha_4-3}t^{j(\alpha_2-1)}\\
		&=&t^{\alpha_4+\alpha_2-1}\left[ \displaystyle\sum_{j=0}^{\alpha_{2}-2}t^j-\sum_{j=\alpha_{21}}^{\alpha_2-2}t^j \right]\sum_{j=0}^{\alpha_3-2}t^j\sum_{j=0}^{\alpha_4-3}t^{j(\alpha_2-1)}\\
		&=&t^{\alpha_4+\alpha_2-1} \displaystyle\sum_{j=0}^{\alpha_{2}-2}t^j\sum_{j=0}^{\alpha_3-2}t^j\sum_{j=0}^{\alpha_4-3}t^{j(\alpha_2-1)}-t^{\alpha_4+\alpha_2-1+\alpha_{21}}\sum_{j=0}^{\alpha_2-\alpha_{21}-2}t^j \sum_{j=0}^{\alpha_3-2}t^j\sum_{j=0}^{\alpha_4-3}t^{j(\alpha_2-1)}\\
		&=&t^{\alpha_4+\alpha_2-1} \displaystyle\sum_{j=0}^{\alpha_3-2}t^j\sum_{j=0}^{(\alpha_4-2)(\alpha_2-1)-1}t^{j}-t^{\alpha_4+\alpha_2-1+\alpha_{21}}\sum_{j=0}^{\alpha_2-\alpha_{21}-2}t^j \sum_{j=0}^{\alpha_3-2}t^j\sum_{j=0}^{\alpha_4-3}t^{j(\alpha_2-1)}
	\end{eqnarray*}
	Let $S_4(t)=S_3(t)+\displaystyle t^{\alpha_4+\alpha_2-1+\alpha_{21}}\sum_{j=0}^{\alpha_2-\alpha_{21}-2}t^j \sum_{j=0}^{\alpha_3-2}t^j\sum_{j=0}^{\alpha_4-3}t^{j(\alpha_2-1)}$. Then 
	\begin{eqnarray*}
		S_1(t)-S_2(t)&=&S_4(t)+t^{\alpha_4+\alpha_2-1} \displaystyle\sum_{j=0}^{\alpha_3-2}t^j\left[ \sum_{j=0}^{(\alpha_4-3)(\alpha_2-1)-1}t^j-\sum_{j=0}^{(\alpha_4-2)(\alpha_2-1)-1}t^{j} \right]\\
		&=& S_4(t)-t^{\alpha_4+\alpha_2-1} \displaystyle\sum_{j=0}^{\alpha_3-2}t^j \sum_{j=(\alpha_4-3)(\alpha_2-1)}^{(\alpha_4-2)(\alpha_2-1)-1}t^j\\
		&=& S_4(t)-t^{(\alpha_4-2)\alpha_2+2} \displaystyle\sum_{j=0}^{\alpha_3-2}t^j \sum_{j=0}^{\alpha_2-2}t^j
	\end{eqnarray*}
	
	Then
	
	\begin{equation*}
		\begin{aligned}
			S_1(t)-S_2(t)=&\displaystyle t^{\alpha_4-1}\sum_{j=0}^{\alpha_3+\alpha_{21}-1}t^j\sum_{j=0}^{\alpha_2-1}t^j+t^{\alpha_4+\alpha_2+\alpha_3-2}\sum_{j=0}^{\alpha_{21}}t^j\sum_{j=0}^{(\alpha_4-3)(\alpha_2-1)-1}t^j\\& +\displaystyle t^{\alpha_4+\alpha_2-1+\alpha_{21}}\sum_{j=0}^{\alpha_2-\alpha_{21}-2}t^j \sum_{j=0}^{\alpha_3-2}t^j\sum_{j=0}^{\alpha_4-3}t^{j(\alpha_2-1)}-t^{(\alpha_4-2)\alpha_2+2} \displaystyle\sum_{j=0}^{\alpha_3-2}t^j \sum_{j=0}^{\alpha_2-2}t^j	
		\end{aligned}
	\end{equation*}
	
\end{proof}
\begin{corollary}
	Using the previous remark, 
	$Q(t)=\displaystyle t^{\alpha_4-1}\sum_{j=0}^{\alpha_3+\alpha_{21}-1}t^j\sum_{j=0}^{\alpha_2-1}t^j+t^{\alpha_4+\alpha_2+\alpha_3-2}\sum_{j=0}^{\alpha_{21}}t^j\sum_{j=0}^{(\alpha_4-3)(\alpha_2-1)-1}t^j +\displaystyle t^{\alpha_4+\alpha_2-1+\alpha_{21}}\sum_{j=0}^{\alpha_2-\alpha_{21}-2}t^j \sum_{j=0}^{\alpha_3-2}t^j\sum_{j=0}^{\alpha_4-3}t^{j(\alpha_2-1)}-t^{(\alpha_4-2)\alpha_2+2} \displaystyle\sum_{j=0}^{\alpha_3-2}t^j \sum_{j=0}^{\alpha_2-2}t^j	
	+t^{(\alpha_4-2)\alpha_2+\alpha_{21}+2}\sum_{j=0}^{\alpha_3-1}t^j\sum_{j=0}^{\alpha_2-\alpha_{21}-3}t^j+t^{(\alpha_4-1)\alpha_2}\sum_{j=0}^{(\alpha_4-1)s_{\alpha_4-1}\alpha_2}t^j+$
	$t^{\alpha_{21}}\displaystyle\sum_{j=0}^{\alpha_3-1}t^j\displaystyle \sum_{j=0}^{\alpha_4-2}t^j-\sum_{j=0}^{(\alpha_4-1)\alpha_2-1}t^j\left[ \displaystyle\sum_{i=1}^{\alpha_4-1}\displaystyle\sum_{j=s_{i-1}}^{s_i-1} t^{j\alpha_1+\left[ (\alpha_4-1)j+(i+1)\right] \alpha_{21}+\alpha_3-j} \right]$\newline$+\displaystyle\sum_{j=0}^{\alpha_{21}-1}t^j \sum_{j=0}^{\alpha_3-1}t^j\sum_{j=0}^{\alpha_4-2}t^j -\displaystyle\sum_{j=0}^{\alpha_{21}-1}t^j\sum_{j=0}^{\alpha_4-2}t^{((\alpha_4-1)s_j+j)\alpha_2+\alpha_4-j}$
\end{corollary}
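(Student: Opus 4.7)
The plan is to obtain the claimed formula by a single algebraic substitution. The immediately preceding corollary already writes $Q(t)$ explicitly, and two of its summands are in fact $+S_1(t)$ and $-S_2(t)$ in the notation of the preceding remark. Since that remark furnishes a closed-form expression for $S_1(t)-S_2(t)$, replacing the pair by this closed form yields exactly the formula being claimed.

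First I would identify $S_1(t)$ and $S_2(t)$ inside the prior expression for $Q(t)$. The leading summand $t^{\alpha_4-1}\sum_{j=0}^{\alpha_3+\alpha_{21}-1}t^j\sum_{j=0}^{(\alpha_4-2)(\alpha_2-1)}t^j$ is $S_1(t)$ verbatim. Distributing the factor $\sum_{j=0}^{\alpha_{21}-1}t^j$ across its bracket breaks that bracket into a positive piece $\sum_{j=0}^{\alpha_{21}-1}t^j\sum_{j=0}^{\alpha_3-1}t^j\sum_{j=0}^{\alpha_4-2}t^j$, which survives verbatim in the target formula, and a negative piece $-\sum_{j=0}^{\alpha_{21}-1}t^j\sum_{j=0}^{\alpha_3-2}t^j\sum_{j=1}^{\alpha_4-2}t^{j\alpha_2+\alpha_4-j}$, which is exactly $-S_2(t)$.

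Next I would invoke the preceding remark, which proved a closed form for $S_1(t)-S_2(t)$ as a sum of four blocks: a leading $t^{\alpha_4-1}\sum_{j=0}^{\alpha_3+\alpha_{21}-1}t^j\sum_{j=0}^{\alpha_2-1}t^j$ term, a $t^{\alpha_4+\alpha_2+\alpha_3-2}\sum_{j=0}^{\alpha_{21}}t^j\sum_{j=0}^{(\alpha_4-3)(\alpha_2-1)-1}t^j$ term, a $t^{\alpha_4+\alpha_2-1+\alpha_{21}}\sum_{j=0}^{\alpha_2-\alpha_{21}-2}t^j\sum_{j=0}^{\alpha_3-2}t^j\sum_{j=0}^{\alpha_4-3}t^{j(\alpha_2-1)}$ triple-sum block, and a final negative $-t^{(\alpha_4-2)\alpha_2+2}\sum_{j=0}^{\alpha_3-2}t^j\sum_{j=0}^{\alpha_2-2}t^j$ term. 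Substituting this four-term expression in place of $+S_1(t)-S_2(t)$ in the prior $Q(t)$ introduces precisely the first four summands of the claimed identity.

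Finally I would transcribe, unchanged, the remaining summands of the prior expression: the $t^{(\alpha_4-2)\alpha_2+\alpha_{21}+2}\sum_{j=0}^{\alpha_3-1}t^j\sum_{j=0}^{\alpha_2-\alpha_{21}-3}t^j$ block, the geometric chunk $t^{(\alpha_4-1)\alpha_2}\sum_{j=0}^{(\alpha_4-1)s_{\alpha_4-1}\alpha_2}t^j$, the product $t^{\alpha_{21}}\sum_{j=0}^{\alpha_3-1}t^j\sum_{j=0}^{\alpha_4-2}t^j$, the subtraction $-\sum_{j=0}^{(\alpha_4-1)\alpha_2-1}t^j\bigl[\sum_{i=1}^{\alpha_4-1}\sum_{j=s_{i-1}}^{s_i-1}t^{j\alpha_1+[(\alpha_4-1)j+(i+1)]\alpha_{21}+\alpha_3-j}\bigr]$, the surviving positive remainder $\sum_{j=0}^{\alpha_{21}-1}t^j\sum_{j=0}^{\alpha_3-1}t^j\sum_{j=0}^{\alpha_4-2}t^j$ from the distributed bracket, and the trailing negative $-\sum_{j=0}^{\alpha_{21}-1}t^j\sum_{j=0}^{\alpha_4-2}t^{((\alpha_4-1)s_j+j)\alpha_2+\alpha_4-j}$. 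Read together with the four substituted summands, these reproduce the claimed formula term by term. The only real obstacle is clerical: one must distribute $\sum_{j=0}^{\alpha_{21}-1}t^j$ through the bracket without dropping a sign so that the $-S_2(t)$ piece is cleanly isolated; no new algebra beyond the preceding remark is needed.
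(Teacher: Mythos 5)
Your proposal is correct and is exactly the argument the paper intends: the first summand of the previous expression for $Q(t)$ is $S_1(t)$, distributing $\sum_{j=0}^{\alpha_{21}-1}t^j$ over the bracket isolates $-S_2(t)$ together with the surviving positive triple sum, and substituting the remark's closed form for $S_1(t)-S_2(t)$ while transcribing the remaining summands yields the stated formula term by term. No further justification is needed beyond this bookkeeping.
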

Now we will focus on 2nd, 4th and 5th terms of $Q(t)$.
\begin{remark}
	$\displaystyle t^{\alpha_4+\alpha_2+\alpha_3-2}\sum_{j=0}^{\alpha_{21}}t^j\sum_{j=0}^{(\alpha_4-3)(\alpha_2-1)-1}t^j -t^{(\alpha_4-2)\alpha_2+2} \displaystyle\sum_{j=0}^{\alpha_3-2}t^j \sum_{j=0}^{\alpha_2-2}t^j  +t^{(\alpha_4-2)\alpha_2+\alpha_{21}+2}\sum_{j=0}^{\alpha_3-1}t^j\sum_{j=0}^{\alpha_2-\alpha_{21}-3}t^j=t^{\alpha_4+\alpha_2+\alpha_3-2}\displaystyle\sum_{j=0}^{\alpha_{21}-1}t^j\sum_{j=0}^{(\alpha_4-3)(\alpha_2-1)-\alpha_3}t^j+t^{\alpha_4+\alpha_2+\alpha_3+\alpha_{21}-2}\sum_{j=0}^{(\alpha_4-3)\alpha_2-\alpha_3-\alpha_{21}-2}t^j	$	
\end{remark}
\begin{proof}
	$\displaystyle t^{\alpha_4+\alpha_2+\alpha_3-2}\sum_{j=0}^{\alpha_{21}}t^j\sum_{j=0}^{(\alpha_4-3)(\alpha_2-1)-1}t^j -t^{(\alpha_4-2)\alpha_2+2} \displaystyle\sum_{j=0}^{\alpha_3-2}t^j \sum_{j=0}^{\alpha_2-2}t^j  +t^{(\alpha_4-2)\alpha_2+\alpha_{21}+2}\sum_{j=0}^{\alpha_3-1}t^j\sum_{j=0}^{\alpha_2-\alpha_{21}-3}t^j= \displaystyle t^{\alpha_4+\alpha_2+\alpha_3-2}\sum_{j=0}^{\alpha_{21}}t^j\sum_{j=0}^{(\alpha_4-3)(\alpha_2-1)-1}t^j+ t^{(\alpha_4-2)\alpha_2+2}\sum_{j=0}^{\alpha_3-2} t^j\left[  t^{\alpha_{21}}\sum_{j=0}^{\alpha_2-\alpha_{21}-3}t^j-\sum_{j=0}^{\alpha_2-2}t^j  \right] +t^{(\alpha_4-2)\alpha_2+\alpha_{21}+1+\alpha_3}\sum_{j=0}^{\alpha_2-\alpha_{21}-3}t^j= \displaystyle t^{\alpha_4+\alpha_2+\alpha_3-2}\sum_{j=0}^{\alpha_{21}}t^j\sum_{j=0}^{(\alpha_4-3)(\alpha_2-1)-1}t^j	+t^{(\alpha_4-2)\alpha_2+2}\sum_{j=0}^{\alpha_3-2} t^j\left[  -\sum_{j=0}^{\alpha_{21}-1}t^j-t^{\alpha_2-2} \right]  +t^{(\alpha_4-2)\alpha_2+\alpha_{21}+1+\alpha_3}\sum_{j=0}^{\alpha_2-\alpha_{21}-3}t^j=
	\displaystyle t^{\alpha_4+\alpha_2+\alpha_3-2}\sum_{j=0}^{\alpha_{21}}t^j\sum_{j=0}^{(\alpha_4-3)(\alpha_2-1)-1}t^j	-t^{(\alpha_4-2)\alpha_2+2}\sum_{j=0}^{\alpha_3-2} t^j \sum_{j=0}^{\alpha_{21}-1}t^j
	-t^{(\alpha_4-1)\alpha_2}\sum_{j=0}^{\alpha_3-2} t^j +t^{(\alpha_4-2)\alpha_2+\alpha_{21}+1+\alpha_3}\sum_{j=0}^{\alpha_2-\alpha_{21}-3}t^j= 
	\displaystyle\sum_{j=0}^{\alpha_{21}-1}t^j\left[  t^{\alpha_4+\alpha_2+\alpha_3-2}\sum_{j=0}^{(\alpha_4-3)(\alpha_2-1)-1}t^j-t^{(\alpha_4-2)\alpha_2+2}\sum_{j=0}^{\alpha_3-2}t^j\right] +t^{\alpha_4+\alpha_2+\alpha_3+\alpha_{21}-2} \sum_{j=0}^{(\alpha_4-3)(\alpha_2-1)-1}t^j-t^{(\alpha_4-1)\alpha_2}\sum_{j=0}^{\alpha_3-2} t^j+t^{(\alpha_4-2)\alpha_2+\alpha_{21}+1+\alpha_3}\sum_{j=0}^{\alpha_2-\alpha_{21}-3}t^j$ 
	
	\noindent
	$=\displaystyle\sum_{j=0}^{\alpha_{21}-1}t^j\left[  t^{\alpha_4+\alpha_2+\alpha_3-2}\left(\sum_{j=0}^{(\alpha_4-3)(\alpha_2-1)-\alpha_3}t^j+\sum_{j=(\alpha_4-3)(\alpha_2-1)-\alpha_3+1}^{(\alpha_4-3)(\alpha_2-1)-1}t^j\right)-t^{(\alpha_4-2)\alpha_2+2}\sum_{j=0}^{\alpha_3-2}t^j\right] +$\newline
	$\displaystyle t^{\alpha_4+\alpha_2+\alpha_3+\alpha_{21}-2} \sum_{j=0}^{(\alpha_4-3)(\alpha_2-1)-1}t^j-t^{(\alpha_4-1)\alpha_2}\sum_{j=0}^{\alpha_3-2} t^j+t^{(\alpha_4-2)\alpha_2+\alpha_{21}+1+\alpha_3}\sum_{j=0}^{\alpha_2-\alpha_{21}-3}t^j=$\newline$ t^{\alpha_4+\alpha_2+\alpha_3-2}\displaystyle\sum_{j=0}^{\alpha_{21}-1}t^j\sum_{j=0}^{(\alpha_4-3)(\alpha_2-1)-\alpha_3}t^j+\displaystyle t^{\alpha_4+\alpha_2+\alpha_3+\alpha_{21}-2} \sum_{j=0}^{(\alpha_4-3)(\alpha_2-1)-1}t^j-t^{(\alpha_4-1)\alpha_2}\sum_{j=0}^{\alpha_3-2} t^j+$\newline$\displaystyle t^{(\alpha_4-2)\alpha_2+\alpha_{21}+1+\alpha_3}\sum_{j=0}^{\alpha_2-\alpha_{21}-3}t^j =t^{\alpha_4+\alpha_2+\alpha_3-2}\displaystyle\sum_{j=0}^{\alpha_{21}-1}t^j\sum_{j=0}^{(\alpha_4-3)(\alpha_2-1)-\alpha_3}t^j+t^{\alpha_4+\alpha_2+\alpha_3+\alpha_{21}-2}\sum_{j=0}^{(\alpha_4-3)\alpha_2-\alpha_3-\alpha_{21}-2}t^j
	$		
\end{proof}

\end{document}